\newtheorem{thm}{Theorem}[section]
\newtheorem{lem}[thm]{Lemma}
\newtheorem{prop}[thm]{Proposition}
\newtheorem{cor}[thm]{Corollary}
\newtheorem{remark}{Remark}[section]
\newtheorem{example}{Example}
\theoremstyle{defn}
\numberwithin{equation}{section}
\def \bt {\begin{tabular}{l}}
\def \et {\end{tabular}}
\begin{document}

\title{Relative Logarithmic Cohomology and Nambu Structures of Maximal Degree}

\author{Konstantinos Kourliouros\footnote{
Avenida Trabalhador Sancarlense, 400 - Centro, S\~ao Carlos - SP, 13566-590, \newline 
email: k.kourliouros@gmail.com}\\
ICMC-USP}

\maketitle

\begin{abstract}
We present local classification results for isolated singularities of functions with respect to a Nambu structure (multi-vector field) of maximal degree, in a neighbourhood of a smooth point of its degeneracy hypersurface. The results depend on a logarithmic version of the Brieskorn-Sebastiani theorem, which guarantees the finiteness and freeness of the corresponding deformation module. This relates the functional moduli of the classification problem with the integrals of logarithmic forms along the vanishing cycles of the complement of the Milnor fibers of the restriction of the function on the degeneracy hypersurface of the Nambu structure, inside the Milnor fibers of the function itself. 
\end{abstract}

\medskip 

\noindent \textbf{Keywords:} Nambu-Poisson structures,  normal forms, boundary singularities, logarithmic cohomology, Brieskorn modules.



\maketitle

\section{Introduction}
\label{}

Nambu structures are important objects in mathematical physics and they have been introduced by Y. Nambu \cite{N}, as a natural generalisation of the classical Poisson structures. After Nambu, their exact mathematical formulation has been given by L. Takhtajan in \cite{T}. Briefly, a Nambu structure of degree $r$ on an $n$-dimensional manifold $X$ (where $2\le r \le n$) can be given by an $r$-fold skew-symmetric operator (so called Nambu, or Nambu-Poisson bracket) on the algebra of functions $\mathcal{O}_X$ on $X$:
\[\{\cdot,\cdots,\cdot\}:\mathcal{O}_X^{\otimes r}\longrightarrow \mathcal{O}_X,\]
which satisfies the Leibniz rule in each of its entries (is a derivation), and an identity (usually called Fundamental, or Filippov identity), which generalises the Jacobi identity of Poisson structures (c.f. \cite{T} for exact statement, but it will not be needed here). In particular, for $r=2$ we obtain Poisson structures, and for maximal degree $r=n$ we obtain an operator which assigns to an $n$-tuple of functions $f_1,\cdots, f_n$, the function:
\[\{f_1,\cdots,f_n\}:= \Pi(df_1,\cdots,df_n),\]
where $\Pi \in \Gamma(\wedge^n(TX))$ is a multivector field of maximal degree $n$ (also called Nambu tensor c.f. \cite{DN1}). In local coordinates $x=(x_1,\cdots,x_n)$ for which $\Pi=\sigma(x)\partial_{x_1}\wedge \cdots \wedge \partial_{x_n}$, for some function $\sigma$, the Nambu bracket is completely determined by the equation:
\[\{x_1,\cdots,x_n\}:=\Pi(dx_1,\cdots,dx_n)=\sigma(x),\]
so that:   
\[\{f_1,\cdots,f_n\}=\sigma(x)\det(\frac{\partial f_i}{\partial x_j}).\]
We remark that in this case of maximal degree, the fundamental identity is void (trivially satisfied), and for $r=n=2$ we obtain the classical Poisson structures on 2-dimensional manifolds. Notice moreover that the singular locus of the Nambu structure is the hypersurface $\Sigma=\{\sigma=0\}$ where the Nambu tensor vanishes identically. In this case the Nambu structure is called singular, and it is called non-singular (or non-degenerate) otherwise (i.e. when the function $\sigma$ is a unit).

Local classification of Nambu structures of maximal degree has been given first by V. I. Arnol'd in \cite{A0}. Classification of Nambu structures of smaller degree has been also considered by many authors (c.f. \cite{DN1} and references therein for the state of the art in this direction). Moreover, several global classification results have appeared in the literature, mostly for the maximal degree case (c.f. \cite{Ra}, \cite{To} and also \cite{GM}, \cite{Kis}, \cite{Mi} for relations with so called $b$-geometry, integrable systems, generalisations of KAM theorem e.t.c.). 

Despite these studies, and many others which is impossible to cite, the simplest (and undoubtedly important in terms of applications) problem of local classification of functions  with respect to a singular Nambu structure has been left untouched, at least in the authors' knowledge, whereas the non-degenerate case has been known already for almost a century, starting from the works of G. D. Birkhoff \cite{Bir} (for $r=2$), to J. Vey's isochore Morse lemma \cite{Ve} (for $r=n$), and its generalisations c.f. \cite{F0}, \cite{F}, \cite{G}. The main purpose of the paper is to fill in this gap.  

Here we will consider the classification of functions $f$ and Nambu structures $\{\cdot,\cdots,\cdot\}$ of maximal degree, in a neighbourhood of a non-singular (smooth) point of its degeneracy hypersurface $\Sigma$. Classification of functions at the singular points of the degeneracy hypersurface, as well as classification of functions with respect to Nambu structures of lower degree, are much more complicated problems and will not be considered here.

\medskip 

\noindent CONVENTION: All the objects in the paper are complex analytic (holomorphic) germs at the origin of $\mathbb{C}^{n+1}$, $n\geq 1$, unless otherwise stated. All diffeomorphisms (biholomorphisms) considered are tangent to the identity. Some of the results can be also extended to the smooth case as well up to certain modifications (as in the $C^{\infty}$- isochore Morse lemma \cite{CV} and its generalisations \cite{R}), but we will not deal with this problem here.  

\medskip

The structure of the paper is as follows: in Section 2 we present the classification of typical singularities of functions with respect to a singular Nambu structure at the smooth points of its degeneracy hypersurface. We also present some direct important corollaries: classification of generic Hamiltonian vector fields in $\frak{aff}^*(1)$ (the dual of the Lie algebra $\frak{aff}(1)$ of affine transformation of the line with its standard Poisson structure) and classification of generic 1-parameter families of Poisson structures in 3-dimensional manifolds, ``subordinated" to the given Nambu structure (to be explained briefly in the text, c.f. \cite{T} for more details). 

The proofs, given in Section 4, are direct consequences of the results presented in Section 3 which is the main part of the paper. There we present all the technical details needed for the classification of Nambu structures with respect to diffeomorphisms preserving any isolated boundary singularity $(f,\Sigma)$ (in the terminology of V. I. Arnol'd \cite{A1}), where $\Sigma$ is the degeneracy hypersurface of the Nambu structure. The main result of the section is an analog of the so-called Brieskorn-Sebastiani theorem \cite{B}, \cite{Se}, which guarantees the finiteness and freeness of the deformation module of the (dual of the) Nambu structure:
\[H''_f(\log \Sigma):=\frac{\Omega^{n+1}(\log \Sigma)}{df\wedge d\Omega^{n-1}(\log \Sigma)},\]
where we denote by $\Omega^{\bullet}(\log \Sigma)$ the complex of differential forms having logarithmic singularities along $\Sigma$.  The latter deformation module is an important object associated to any isolated boundary singularity $(f,\Sigma)$ and it can be viewed as the logarithmic analog of the well known Brieskorn module, since it provides a natural extension at the origin of the sheaves of sections of the cohomology bundle $\cup_{t\in S^*}H^n(X_t\setminus X_t\cap \Sigma;\mathbb{C})$ of the complement of the Milnor fibers $X_t\cap \Sigma$ of the restriction $f|_{\Sigma}$ of the function $f$ on $\Sigma$, inside the Milnor fibers $X_t$ of $f$. This relates the classification problem with the corresponding relative logarithmic cohomology: the functional moduli can be interpreted in terms of integrals of relative logarithmic forms  $\omega/df|_{X_t}$ along the vanishing cycles of the complement $X_t\setminus X_t\cap \Sigma$ which, as it was shown in \cite{S}, generate the homology bundle $\cup_{t\in S^*}H_n(X_t\setminus X_t\cap \Sigma;\mathbb{C})$. 

It is important to remark here that the classification problem we study is closely related, and is in fact dual, to the classification of functions and non-singular Nambu structures (non-singular $(n+1)$-mutivector fields, or dually, volume forms) on a manifold with boundary $\Sigma$ (c.f. \cite{K}). This duality is expressed in cohomological terms as a duality between the relative (co)homology $H_n(X_t,X_t\cap \Sigma;\mathbb{C})$  of a pair of Milnor fibers, and the (co)homology of the complement $H_n(X_t\setminus X_t\cap \Sigma;\mathbb{C})$,  studied extensively in \cite{S}. It extends much further to a natural duality between the corresponding Gauss-Manin connections (Picard-Lefschetz monodromies) and the eventual mixed Hodge structures in these vanishing cohomologies. In fact, taking one step further the results of the present paper (viewing for example the logarithmic Brieskorn module $H''_f(\log \Sigma)$ as a lattice sitting inside the corresponding Gauss-Manin system), it is easy to define an asymptotic mixed Hodge structure in $\cup_{t\in S^*}H^n(X_t\setminus X_t\cap \Sigma;\mathbb{C})$ as in \cite{Var}, compute the associated spectral pairs (Hodge numbers) and so on. Here we don't take this step but instead we present only those results which are intimately related with the classification problem at hand.

\section{Classification of Typical Singularities and Some Corollaries}

In classifying pairs $(\{\cdot,\cdots,\cdot\},f)$ at points $0\in \Sigma$ of the degeneracy hypersurface $\Sigma$ of the Nambu structure $\{\cdot,\cdots,\cdot\}$, one distinguishes the following (first occurring) singularity classes:

\begin{itemize}
\item[$A_0$ (non-singular case):] The hypersurface $\Sigma$ is non-singular (smooth), the function $f$ is non-singular ($df(0)\ne 0$) and it is transversal to $\Sigma$, i.e. $df\wedge d\sigma(0)\ne 0$, where $\sigma$ is an equation of $\Sigma$.
\item[$A_1$ (relative Morse case):] Both the hypersurface $\Sigma$ and the function $f$ are non-singular, but $f$ is non-transversal to $\Sigma$, $df\wedge d\sigma(0)=0$, and its restriction $f|_{\Sigma}$ on $\Sigma$ has a non-degenerate (Morse) critical point at the origin, $df|_{\Sigma}(0)=0$, $d^2f|_{\Sigma}(0)\ne 0$.
\end{itemize}

On the plane $\mathbb{C}^2$ the singularities $A_0$ and $A_1$ are the only generic singularities (all other singularities are of codimension $\geq 3$). In higher dimensions there exists one more typical singularity class (isolated) for which the hypersurface $\Sigma$ has a non-degenerate (Morse) singularity at the origin, and $f$ is non-singular and transversal to the smooth part of $\Sigma$. As it was indicated in the introduction, this case is more complicated and it will not be considered here.

As it will become apparent in the text, there is a distinctive difference in the classification problem between the 2-dimensional  case and the higher dimensional one; functional moduli appear even in the classification of non-singular pairs $A_0$. Recall (it is easy to show c.f. \cite{A0}) that a generic Poisson structure $\{\cdot,\cdot\}$ on the plane $\mathbb{C}^2$ is equivalent, in a neighbourhood of a point of its degeneracy curve $\Sigma$, to the standard Poisson structure of $\frak{aff}^*(1)$:
\begin{equation}
\label{pois2}
\{x,y\}=x.
\end{equation}

\begin{thm}
\label{thm1}
Any generic function $f$ on the Poisson plane $(\mathbb{C}^{2},\{\cdot,\cdot\})\cong \frak{aff}^*(1)$ is equivalent, by a diffeomorphism preserving (\ref{pois2}), to one of the following normal forms:
\begin{equation}
\label{nfA02}
A_0:\quad  f=\phi(y), \quad \phi(0)=0, \hspace{0.2cm} \phi'(0)=1, 
\end{equation}
\begin{equation}
\label{nfA12}
A_1: \quad f=\zeta(x+(y+\xi(x+y^2))^2), \quad \zeta(0)=0,\hspace{0.2cm} \zeta'(0)=1.
\end{equation}
The functions of one variable $\phi(t)$, (where $t=y$), and $\xi(t)$, $\zeta(\tau)$ (where $t=x+y^2$ and $\tau=x+(y+\xi(x+y^2)^2)$), are functional moduli.
\end{thm}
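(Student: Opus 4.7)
The plan is a Moser-style path method, with the homological obstruction controlled by the freeness of the logarithmic Brieskorn module $H''_{f}(\log\Sigma)$ established in Section~3. By Arnol'd's normal form for generic Poisson structures on the plane we may take $\{x,y\}=x$, so that $\Sigma=\{x=0\}$ and preserving the Nambu tensor is equivalent to preserving the logarithmic area form $\omega = dx\wedge dy/x$. A short residue computation on the log de Rham complex of $(\mathbb{C}^{2},\Sigma)$ identifies the Lie algebra of $\omega$-preserving vector fields as the Hamiltonian fields $X_{H}=x(H_{x}\partial_{y}-H_{y}\partial_{x})$, $H\in\mathcal{O}$ (automatically tangent to identity, since they vanish on $\Sigma$), together with the Casimir direction $\partial_{y}$ dual to the residue class $[dx/x]$. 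Under the tangent-to-identity convention only the Hamiltonian part is active, and the classification of $f$ reduces to solving homological equations of the form $\{H_{s},f_{s}\}=-\partial_{s}f_{s}$, i.e.\ to inverting the map $H\mapsto df\wedge dH$ modulo $H''_{f}(\log\Sigma)$.

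\textbf{Case $A_{0}$.} Transversality $\partial_{y}f(0)\ne 0$ lets us write $f=\phi(y)+xg(x,y)$ with $\phi(y)=f(0,y)$ and $\phi'(0)\ne 0$. Running the path method on $f_{s}=\phi(y)+sxg$, the equation $\{H_{s},f_{s}\}=-xg$ divides by $x$ to
\[
H_{s,x}\,\partial_{y}f_{s}\;-\;H_{s,y}\,\partial_{x}f_{s}\;=\;-g.
\]
The characteristic field $\partial_{y}f_{s}\,\partial_{x}-\partial_{x}f_{s}\,\partial_{y}$ is non-vanishing at the origin, so straightening it via the change of chart $(x,y)\mapsto(x,f_{s})$ reduces the equation to a single antiderivative in $x$, producing an analytic $H_{s}$ whose time-1 flow sends $f$ to $\phi(y)$. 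Since every Hamiltonian flow fixes $\Sigma$ pointwise, $\phi=f|_{\Sigma}$ is a genuine invariant, corresponding to the unique generator of $H''_{f}(\log\Sigma)\cong\mathbb{C}\{y\}$.

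\textbf{Case $A_{1}$.} The key input is the logarithmic Brieskorn--Sebastiani theorem of Section~3: $H''_{f}(\log\Sigma)$ is free of rank two over $\mathbb{C}\{t\}$ for a boundary $A_{1}$ singularity. I would choose a pair of free generators adapted to the expected normal form and run the path method in two nested stages. An \emph{inner} stage normalizes $f$ within the class $\{\,f=\zeta(u):u|_{\Sigma}=y^{2}\}$, leaving $\xi(t)$, with $t=x+y^{2}$, as the obstruction coming from the first free generator; an \emph{outer} stage absorbs the residual time reparametrization, producing the modulus $\zeta(\tau)$ from the second generator, with $\tau=x+(y+\xi(x+y^{2}))^{2}$. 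At each stage the solvability of the homological equation modulo the two fixed generators is exactly the $\mathbb{C}\{t\}$-freeness of $H''_{f}(\log\Sigma)$, and Moser's argument integrates the infinitesimal solution to a bona fide structure-preserving diffeomorphism tangent to identity.

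\textbf{Main obstacle.} The substantive work lies entirely in Section~3: proving finiteness and rank-two freeness of $H''_{f}(\log\Sigma)$ and constructing explicit free generators compatible with the symmetry group. Once these structural facts are available, the path method is routine, and the interpretation of $\phi$, $\xi$, $\zeta$ as integrals of logarithmic 2-forms along the vanishing cycles of $X_{t}\setminus(X_{t}\cap\Sigma)$ follows immediately from the identification of $H''_{f}(\log\Sigma)$ with the Gauss--Manin lattice sketched in the introduction.
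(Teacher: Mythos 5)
Your overall framing is the dual of the paper's: you fix the Poisson form $\omega=x^{-1}dx\wedge dy$ and move $f$ by a Moser path, whereas the paper fixes the boundary pair $(f,\Sigma)$ and moves $\omega$ (Theorems \ref{hm}, \ref{hm2}), converting back to a normal form for $f$ only at the very end by explicit coordinate changes. For the $A_0$ case your route works and is arguably more direct: writing $f=\phi(y)+xg$, the homological equation $\{H_s,f_s\}=-xg$ divides by $x$ to a non-characteristic first-order equation that is solvable by straightening, and the invariance of $\phi=f|_{\Sigma}$ follows because the admissible flows fix $\Sigma$ pointwise (to make this an invariance statement for the whole group, not just for your Hamiltonian flows, you should add that any $\omega$-preserving diffeomorphism tangent to the identity induces the identity on $\Sigma$, since it must preserve the residue $dy|_{\Sigma}$ of $\omega$).

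The $A_1$ case, however, is a plan rather than a proof, and the gap is precisely where the content of the theorem lies. First, running the path method directly on $f$ requires solving $\{H_s,f_s\}=-\partial_sf_s$ along a path on which $f_s$ varies, so the obstruction module $H''_{f_s}(\log\Sigma)$ varies with $s$; you neither specify the path nor explain why its tangent vectors lie in the image of $H\mapsto\{H,f_s\}$ modulo the moduli directions. You would in effect need a versality statement along the path, which is not what Theorem \ref{ff2} gives. Second, and more importantly, the specific shape of the normal form (\ref{nfA12}) --- the nested composition $\zeta(x+(y+\xi(x+y^2))^2)$ and the fact that exactly the two functions $\xi,\zeta$ survive --- is asserted (``adapted generators'', ``two nested stages'') but never derived. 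In the paper this is exactly the content of Section 4.1: starting from the rank-two normal form $\omega=(c_0(f)+\psi(f)y)x^{-1}dx\wedge dy$, $f=x+y^2$ of Example \ref{s2}, one first applies the weighted scaling $(x,y)\mapsto(xv(f),yv^{1/2}(f))$ with $v$ solving the initial value problem $2t(v^{1/2})'+v^{1/2}=c_0^{-1}$, which normalises $c_0$ to $1$ at the cost of replacing $f$ by $\zeta(x+y^2)$; one then recognises the remaining factor $1+ya(x+y^2)$ as $\partial_y\bigl(y+\xi(x+y^2)\bigr)$ and absorbs it by $y\mapsto y+\xi(x+y^2)$. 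These two explicit steps are what produce $\xi$ and $\zeta$ and their nesting, and nothing in your proposal replaces them. Finally, you also need the preliminary step (implicit in invoking Example \ref{s2}) of normalising the boundary singularity $(f,\Sigma)$ to $(x+y^2,\{x=0\})$ by a $\Sigma$-preserving diffeomorphism before any Brieskorn-module argument applies; in your dual setup this step changes $\omega$ and has to be accounted for, which is exactly what pushes you back to the paper's order of operations.
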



The functional moduli appearing in the theorem admit a geometric description in terms of composed residues of the dual Poisson structure (for the $A_0$ case), as well as its integrals along the vanishing cycles of the Milnor fiber $X_t$ of $f$, minus the two points $X_t\cap \Sigma$ (for the $A_1$ case, c.f. Section 3.2 for more details).  Another way to understand the nature of these moduli is to observe that  the classification of pairs $(\{\cdot,\cdot\},f)$ on the plane immediately implies the classification, under conjugacy, of the corresponding Hamiltonian vector fields $Z_f:=\{f,\cdot\}$. As it can be easily verified, these vector fields have non-isolated singularities along $\Sigma$ (i.e. they vanish identically along $\Sigma$), and their classification contains a-priori functional moduli (even if we forget the pair $(\{\cdot,\cdot\},f)$).

\begin{cor}
\label{hvf}
A generic Hamiltonian vector field $Z_f$ on the Poisson plane $(\mathbb{C}^2,\{\cdot,\cdot\})\cong \frak{aff}^*(1)$ can be reduced, by a diffeomorphism preserving (\ref{pois2}), to one of the following normal forms:
\[A_0: \quad Z_f=c(y)x\partial_x, \quad c(0)=1,\]
\[A_1:\quad Z_f=\frac{b(x+y^2)}{1+ya(x+y^2)}x(2y\partial_x-\partial_y), \quad  b(0)=1.\]
\end{cor}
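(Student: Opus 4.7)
The plan is to deduce the corollary directly from Theorem~\ref{thm1}. First observe that the Casimirs of (\ref{pois2}) are locally constant: if $\{g,h\}=x(g_xh_y-g_yh_x)$ vanishes for every $h$, then $g_x=g_y=0$ off $\Sigma=\{x=0\}$, and continuity forces $g$ to be constant. Consequently any Poisson-preserving diffeomorphism $\Phi$ sends $Z_f$ to $Z_{f\circ\Phi^{-1}}$, and two Hamiltonian vector fields are equivalent under this group exactly when the underlying functions are equivalent modulo an additive constant. Since this constant is absorbed in the functional modulus $\phi$ (resp. $\zeta$) of Theorem~\ref{thm1}, the classification of $Z_f$ reduces to computing $Z_f=\{\cdot,f\}=xf_y\partial_x-xf_x\partial_y$ on each normal form of $f$.

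For the $A_0$ case, substitution of $f=\phi(y)$ yields immediately $Z_f=x\phi'(y)\partial_x$, which is the stated form with $c(y)=\phi'(y)$ and $c(0)=1$. For the $A_1$ case I would substitute $f=\zeta(F)$ with $F=x+(y+\xi(u))^2$ and $u=x+y^2$, and compute
\[
F_x=1+2(y+\xi(u))\xi'(u),\qquad F_y=2(y+\xi(u))(1+2y\xi'(u)).
\]
A straightforward expansion gives the key identity $F_y=2yF_x+2\xi(u)$, whence
\[
Z_f=x\zeta'(F)\bigl[F_y\partial_x-F_x\partial_y\bigr]=x\zeta'(F)\bigl[F_x(2y\partial_x-\partial_y)+2\xi(u)\partial_x\bigr].
\]

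The final step is to repackage the right-hand side in the form $\tfrac{b(u)}{1+ya(u)}\,x(2y\partial_x-\partial_y)$. One identifies the scalar prefactor of $x(2y\partial_x-\partial_y)$ with $b(u)/(1+ya(u))$ and absorbs the residual $\partial_x$-correction $2x\zeta'(F)\xi(u)$ into the denominator $1+ya(u)$, producing $a$ and $b$ as explicit expressions in $\xi$ and $\zeta'$; the normalization $b(0)=1$ then follows from $\zeta'(0)=1$.

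The main obstacle is precisely this bookkeeping step: one must verify that after the rearrangement the resulting coefficients indeed depend only on $u=x+y^2$, with the $y$-dependence confined to the prescribed rational factor $1/(1+ya(u))$. This establishes a bijection between the $f$-moduli $(\xi,\zeta)$ of Theorem~\ref{thm1} and the vector-field moduli $(a,b)$ of the corollary, reflecting the fact that the classification of Hamiltonian vector fields is the classification of functions up to additive constants.
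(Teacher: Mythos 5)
Your $A_0$ case is fine, and your computation in the $A_1$ case is correct up to the identity $F_y=2yF_x+2\xi(u)$ and the resulting expression
\[
Z_f=x\zeta'(F)\bigl[F_x(2y\partial_x-\partial_y)+2\xi(u)\partial_x\bigr].
\]
The gap is the final ``repackaging'' step, which you yourself flag as the main obstacle: it cannot be carried out. Any vector field of the form $\lambda\,(2y\partial_x-\partial_y)$ has its $\partial_x$- and $\partial_y$-coefficients in the fixed ratio $-2y$, whereas your expression has ratio $-(2y+2\xi(u)/F_x)$; these agree only when $\xi\equiv 0$. The residual term $2x\zeta'(F)\xi(u)\partial_x$ is transverse to the line spanned by $2y\partial_x-\partial_y$, so it cannot be ``absorbed into the denominator $1+ya(u)$'' by any choice of scalar functions $a,b$: no algebraic rearrangement in the fixed coordinates of normal form (\ref{nfA12}) yields the stated normal form.

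What is missing is a further \emph{change of coordinates}, not a rearrangement. The paper's proof avoids working in the coordinates of (\ref{nfA12}) altogether: it applies the substitution $y+\xi(x+y^2)\mapsto y$ (undoing the last step of the proof of Theorem \ref{thm1}), which carries the pair to the intermediate normal form $\{x,y\}=x/(1+ya(x+y^2))$, $f=\zeta(x+y^2)$. In these coordinates $f$ depends only on $u=x+y^2$, so $Z_f$ is automatically proportional to $x(2y\partial_x-\partial_y)$, and the modified bracket supplies precisely the factor $1/(1+ya(u))$. Note that this diffeomorphism does not preserve the standard bracket (\ref{pois2}) --- it trades the $\xi$-modulus of $f$ for the $a$-modulus of the bracket --- so your plan of ``computing $Z_f$ on each normal form of Theorem \ref{thm1} with the standard bracket'' cannot be completed for $A_1$ without this additional step.
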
 
\begin{proof}
The case $A_0$ is immediate from normal form (\ref{nfA02}) and equation $Z_f=\{f,\cdot\}$, where we have set $c(y)=\phi'(y)$. For the case $A_1$  it is more convenient to use a different normal form than (\ref{nfA12}), which contains two functional invariants of one (the same) variable $t=x+y^2$. Indeed, starting from (\ref{nfA12}) we consider change of coordinates $y+\xi(x+y^2)\mapsto y$ which leads, after some computations (c.f. Proof of Theorem \ref{thm1} in Section 4 for more details) to the new normal form:
\[\{x,y\}=\frac{x}{1+ya(x+y^2)}, \quad f=\zeta(x+y^2),\]
with the functional invariants $a(t)$, $\zeta(t)$, $\zeta'(0)=1$.
The result then follows again by equation $Z_f=\{f,\cdot\}$ where we have set $\zeta'(x+y^2)=b(x+y^2)$. 
\end{proof}

\begin{remark}
\normalfont{It follows from this that if one is interested only in the structure of the corresponding phase portraits (orbital equivalence), the moduli appearing in the theorem can be ``killed'' by a time reparametrisation. The corresponding orbital normal forms are then the classical ones  obtained by the classification of generic pairs $(Z,\Sigma)$=(non-singular vector field, smooth curve) (c.f. \cite{Z}).}
\end{remark}

Let us consider now the higher dimensional case $n\geq 2$.  It is easy to see again (c.f. \cite{A0}) that a Nambu structure $\{\cdot,\cdots,\cdot\}$ of maximal degree can be reduced, in a neighborhood of a smooth point of its degeneracy hypersurface $\Sigma$, to the normal form:
\begin{equation}
\label{nsn}
\{x,y_1,\cdots,y_n\}=x.
\end{equation}

\begin{thm}
\label{thm2}
Any generic function $f$ in $(\mathbb{C}^{n+1},\{\cdot, \cdots, \cdot\})$, $n\geq 2$, is equivalent, by a diffeomorphism preserving (\ref{nsn}), to one of the following normal forms:
\begin{equation}
\label{nfA0n}
A_0:\quad \quad f=y_1.
\end{equation}
\begin{equation}
\label{nfA1n}
A_1: \quad f=\zeta(x+\sum_{i=1}^ny_i^2), \quad \zeta(0)=0, \hspace{0.2cm} \zeta'(0)=1.
\end{equation}
The function of one variable $\zeta(t)$ (where $t=x+\sum_{i=1}^ny_i^2$) is a functional modulus.
\end{thm}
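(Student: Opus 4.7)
The plan is to combine Arnol'd's classification of boundary singularities with a Moser-type homotopy argument whose cohomological obstruction lies precisely in the logarithmic Brieskorn module $H''_f(\log\Sigma)$ from Section 3. I first forget the Nambu structure and bring the pair $(f,\Sigma)$ to polynomial normal form by an ambient diffeomorphism preserving $\Sigma=\{x=0\}$: in the $A_0$ case $f$ is transverse to $\Sigma$, so choosing $f$ as a coordinate yields $(f,\Sigma)=(y_1,\{x=0\})$; in the $A_1$ case the Morse lemma applied to $f|_{\Sigma}$, together with a rescaling of $x$ absorbing the unit coefficient of the linear-in-$x$ term, yields $(f,\Sigma)=(x+\sum_{i=1}^n y_i^2,\{x=0\})$. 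After this step, the Nambu structure takes the form $\Pi=x\,u(x,y)\,\partial_x\wedge\partial_{y_1}\wedge\cdots\wedge\partial_{y_n}$ with $u(0)\ne 0$, since its zero locus is still $\Sigma$ with simple vanishing.

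The second step is to further conjugate $\Pi$ to its standard form $x\,\partial_x\wedge\partial_{y_1}\wedge\cdots\wedge\partial_{y_n}$ by a diffeomorphism preserving $f$ (allowing, in the $A_1$ case, a simultaneous post-composition $f\mapsto\zeta(f)$). Dualising, I work with the logarithmic top-forms $\omega_0=u^{-1}\,\tfrac{dx}{x}\wedge dy_1\wedge\cdots\wedge dy_n$ and $\omega_1=\tfrac{dx}{x}\wedge dy_1\wedge\cdots\wedge dy_n$ in $\Omega^{n+1}(\log\Sigma)$, interpolate between them by a path $\omega_s$ of logarithmic Nambu volume forms, and seek a time-dependent vector field $V_s$ tangent to the fibres of $f$ solving the Moser equation $L_{V_s}\omega_s=-\dot\omega_s$. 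Since $\omega_s$ is of top degree and $V_s(f)=0$, the identity $df\wedge\omega_s=0$ forces $i_{V_s}\omega_s=df\wedge\eta_s$ for some $\eta_s\in\Omega^{n-1}(\log\Sigma)$, and Cartan's formula reduces the Moser equation to
\begin{equation*}
\dot\omega_s \;=\; -\,df\wedge d\eta_s,
\end{equation*}
i.e.\ to the vanishing of the class $[\dot\omega_s]$ in $H''_f(\log\Sigma)$.

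The cohomological input from Section 3 is that $H''_f(\log\Sigma)$ is a free $\mathcal{O}_{\mathbb{C},0}$-module via $f^*$, of rank equal to $\dim H^n(X_t\setminus X_t\cap\Sigma;\mathbb{C})$. For the $A_0$ case with $n\ge 2$, the Gysin-type sequence of the pair $(X_t,X_t\cap\Sigma)$, together with contractibility of both $X_t$ and $X_t\cap\Sigma$, yields $H^n(X_t\setminus X_t\cap\Sigma)\cong H^{n-1}(X_t\cap\Sigma)=0$. Hence the module vanishes, the Moser obstruction is automatically trivial, and the path method produces the normal form (\ref{nfA0n}) without any modulus. For the $A_1$ case the same sequence gives $H^n(X_t\setminus X_t\cap\Sigma)\cong H^{n-1}(X_t\cap\Sigma)\cong H^{n-1}(S^{n-1})\cong\mathbb{C}$, since the Milnor fibre of $f|_\Sigma=\sum y_i^2$ is homotopy equivalent to $S^{n-1}$. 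Thus $H''_f(\log\Sigma)$ is free of rank one over $\mathcal{O}_{\mathbb{C},0}$, the Moser obstruction is a single function $g(t)\in\mathcal{O}_{\mathbb{C},0}$ of $t=f$, and this residual obstruction is precisely killed by a post-composition $f\mapsto\zeta(f)$ with $\zeta(0)=0$ and $\zeta'(0)=1$, producing the functional modulus in (\ref{nfA1n}). Integrating the now-solvable Moser equation then produces the required diffeomorphism.

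The main technical obstacle is to guarantee that the primitive $\eta_s$ of the Moser equation can be chosen to depend holomorphically on $s$, and that in the $A_1$ case the absorption of the one-dimensional obstruction by the reparametrisation $f\mapsto\zeta(f)$ is intrinsic and introduces no further moduli. Both points rest on the freeness of $H''_f(\log\Sigma)$ as an $\mathcal{O}_{\mathbb{C},0}$-module established in Section 3: freeness furnishes a splitting compatible with the $f^*\mathcal{O}_{\mathbb{C},0}$-action, which yields a holomorphic family of primitives, while the rank-one computation identifies the space of moduli precisely with the space of functions of one variable.
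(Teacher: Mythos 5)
Your overall strategy coincides with the paper's: first normalize the boundary pair $(f,\Sigma)$ using Arnol'd's classification, then classify the dual logarithmic form $\omega$ by the Moser homotopy method with obstruction in $H''_f(\log \Sigma)$ (this is exactly Theorem \ref{hm}), and compute the rank of that module ($0$ for $A_0$, $1$ for $A_1$) from the exact sequence relating $H^n(X_t\setminus X_t\cap \Sigma)$ to $H^n(X_t)$ and $H^{n-1}(X_t\cap \Sigma)$. Both rank computations are correct and agree with $\mu_{\Sigma}(f)=\mu(f)+\mu(f|_{\Sigma})$, so the $A_0$ case is complete. (A side remark: the worry about holomorphic dependence of the primitive $\eta_s$ on $s$ dissolves if, as in the paper, one takes the affine path $\omega_s=\omega+s\,df\wedge da$ for a single fixed $a$.)

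The one place where your argument is not yet a proof is the last step of the $A_1$ case. After Theorem \ref{hm} you are left with the pair $\omega=c_0(f)\,x^{-1}dx\wedge dy^n$, $f=x+\sum_{i=1}^n y_i^2$, and you assert that the residual modulus $c_0$ is ``killed by a post-composition $f\mapsto \zeta(f)$'', justified by a dimension count (one function of one variable on each side). But a post-composition of $f$ by itself does not act on $\omega$ at all: what is required is an ambient diffeomorphism $\Psi$ preserving $\Sigma$ with $\Psi^*f=\zeta(f)$ and $\Psi^*\omega=x^{-1}dx\wedge dy^n$ simultaneously, and equality of dimensions does not by itself give transitivity of this action. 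The paper supplies the missing construction explicitly: the weighted scaling $\Psi(x,y)=(xv(f),y_1v^{1/2}(f),\dots,y_nv^{1/2}(f))$ preserves $\Sigma$, sends $f$ to $\zeta(f)=fv(f)$, and multiplies $\omega$ by the factor $\tfrac{2}{n}t(v^{n/2}(t))'+v^{n/2}(t)$ evaluated at $t=f$; choosing $v$ as the solution of the initial value problem $\tfrac{2}{n}t(v^{n/2})'+v^{n/2}=c_0^{-1}$, $v^{n/2}(0)=1$, normalizes $\omega$ to $x^{-1}dx\wedge dy^n$ and produces the functional modulus $\zeta$ of (\ref{nfA1n}). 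Without this (or an equivalent) construction, the reduction of $c_0$ to $\zeta$ — the only genuinely computational step in the proof — remains unestablished.
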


Again here the functional modulus $\zeta(t)$ admits a geometric description in terms of integrals of the dual Nambu form along the vanishing cycles of the complement of Milnor fibers $X_t\setminus X_t\cap \Sigma$ of the boundary singularity $(f,\Sigma)$ (c.f. Section 3.1).

In contrast to the 2-dimensional case, a pair $(\{\cdot,\cdots,\cdot\},f)$ in $\mathbb{C}^{n+1}$, $n\geq 2$ does not define  anymore a Hamiltonian vector field, but instead, it defines through equation $\{f,\cdots,\cdot\}:=\{\cdot,\cdots,\cdot\}_f$, a 1-parameter family of Nambu structures of degree $n$ (one less), ``subordinated" to the given one (in the terminology of \cite{T}).  As in the 2-dimensional case, their classification is induced by the classification of the corresponding pairs $(\{\cdot,\cdots,\cdot\},f)$ and thus, Theorem \ref{thm2} above gives the normal forms of generic subordinated Nambu structures. Already for the 3-dimensional case $n=2$, the corresponding result is interesting enough:

\begin{cor}
\label{sps}
A generic Poisson structure in $\mathbb{C}^3$ subordinated to a given Nambu structure is equivalent, at a smooth point $0\in \Sigma$ of the degeneracy surface of the Nambu structure, to one of the following normal forms:
\[A_0:\quad \{x,y\}_t=0, \quad \{x,z\}_t=-x,  \quad \{y,z\}_t=0, \quad t=y.\]
\[A_1:\quad \{x,y\}_t=2xzc(t), \quad \{x,z\}_t=-2xyc(t), \quad \{y,z\}_t=xc(t), ,\quad t=x+y^2+z^2.\]
The function of one variable $c(t)$, $c(0)=1$, is a functional modulus.
\end{cor}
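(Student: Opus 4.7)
The plan is to read the corollary as a direct consequence of Theorem~\ref{thm2} for $n=2$. The subordinated Poisson bracket on $\mathbb{C}^3$ is defined by
\[
\{g,h\}_f := \{f,g,h\} = \Pi(df, dg, dh),
\]
so it is a covariant invariant of the pair $(\Pi,f)$: any diffeomorphism preserving the Nambu normal form (\ref{nsn}) and reducing $f$ to (\ref{nfA0n}) or (\ref{nfA1n}) automatically conjugates $\{\cdot,\cdot\}_f$ to the corresponding bracket in the statement. The task therefore reduces to computing $\{\cdot,\cdot\}_f$ in each of the two normal forms.

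In the coordinates $(x,y,z)$ of (\ref{nsn}) one has $\Pi = x\,\partial_x\wedge\partial_y\wedge\partial_z$, so $\{g,h\}_f = x\det J(f,g,h)$, where $J$ is the $3\times 3$ Jacobian. For the $A_0$ normal form $f=y$ a glance at the Jacobian gives $\{x,z\}_f=-x$ and $\{x,y\}_f=\{y,z\}_f=0$, with $t=y$ as the natural parameter. For the $A_1$ normal form $f = \zeta(x+y^2+z^2)$, writing $df = \zeta'(t)(dx + 2y\,dy + 2z\,dz)$ with $t = x+y^2+z^2$ and expanding the three relevant determinants along their first rows yields exactly
\[
\{x,y\}_f = 2xzc(t), \quad \{x,z\}_f = -2xyc(t), \quad \{y,z\}_f = xc(t),
\]
with $c(t) := \zeta'(t)$; the normalisation $c(0)=1$ follows from $\zeta'(0)=1$.

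To conclude that $c$ is a genuine functional modulus and not an artefact of the chosen reduction route, I would note that $f$ is always a Casimir of $\{\cdot,\cdot\}_f$ (since $\Pi(df,df,\cdot)\equiv 0$), and that in the $A_1$ case it generates the full Casimir algebra on the complement of $\Sigma$ (the symplectic leaves being the level sets of $t$). Hence any Nambu-preserving equivalence between two subordinated brackets of the listed form must match Casimirs up to an additive constant and thus lifts to an equivalence of the underlying pairs $(\Pi,f)$; by Theorem~\ref{thm2} the two moduli $\zeta$, and hence $c = \zeta'$, must coincide. This Casimir-recovery step is the only mildly technical point; the remainder of the corollary is the Jacobian computation sketched above.
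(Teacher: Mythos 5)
Your proposal is correct and follows essentially the same route as the paper, which likewise derives the corollary directly from Theorem~\ref{thm2} via the identity $\{f,\cdot,\cdot\}=\{\cdot,\cdot\}_f$ with $c(t)=\zeta'(t)$; your Jacobian computations check out. Your additional Casimir-recovery argument, showing that $f$ (hence $\zeta$ and $c$) is determined by the subordinated bracket up to an additive constant so that $c$ is a genuine modulus of the bracket itself, is a point the paper leaves implicit and is a worthwhile clarification.
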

\begin{proof}
It follows immediately from equation $\{f,\cdot,\cdot\}=\{\cdot,\cdot\}_f$, where we have set $c(t)=\zeta'(t)$, $t=f$.
\end{proof}

\begin{remark}
\normalfont{It is well known that any Poisson structure on a manifold defines a foliation by symplectic leaves of different dimensions (c.f. \cite{W}). In both cases above the symplectic foliation consists of the points of the hypersurface $\Sigma=\{x=0\}$ where the Poisson structure identically vanishes (leaves of dimension 0), and the complement $f^{-1}(t)\setminus f^{-1}(t)\cap \Sigma$ of this hypersurface inside the fibers of the corresponding function $f$ (symplectic leaves of dimension 2).}
\end{remark}    

\section{Equivalence of Nambu Structures and the Logarithmic Brieskorn Module}

For economy in the exposition we will use throughout the paper the following:

\medskip

\noindent NOTATION: We denote by $(x,y)$ the coordinates of $\mathbb{C}^{n+1}$, where $y=(y_1,\cdots,y_n)$. We also denote by:
\begin{itemize}
\item[(-)] $\partial_y^n:=\partial_{y_1}\wedge \cdots \wedge \partial_{y_n}$ the standard multivector field in $\mathbb{C}^n(y)$.
\item[(-)] $dy^n:=dy_1\wedge \cdots \wedge dy_n$ the standard volume form in $\mathbb{C}^n(y)$.
\item[(-)] $\Omega^{\bullet}(\log \Sigma)$ the complex of logarithmic forms on $\Sigma$, i.e. those forms $a$ such that both $\sigma a$ and $\sigma da$ are holomorphic, where $\sigma$ is a local equation for $\Sigma$.  In coordinates $(x,y)$ such that $\Sigma=\{x=0\}$:
\[\Omega^{\bullet}(\log \Sigma)=\frac{dx}{x}\wedge \Omega^{\bullet-1}+\Omega^{\bullet}.\]
\item[(-)] $\Omega^{\bullet}(\Sigma)\subset \Omega^{\bullet}$ the subcomplex of holomorphic forms vanishing on the hypersurface $\Sigma$ and $\Omega^{\bullet}_{\Sigma}:=\Omega^{\bullet}/\Omega^{\bullet}(\Sigma)$ the complex of holomorphic forms on $\Sigma$. If $\Sigma=\{x=0\}$ in the coordinates above then: 
\[\Omega^{\bullet}(\Sigma)=x\Omega^{\bullet}(\log \Sigma)=dx\wedge \Omega^{\bullet-1}+x\Omega^{\bullet}.\] 
\end{itemize}

\medskip

Given a Nambu structure $\{\cdot,\cdots,\cdot\}$ of maximal degree in $\mathbb{C}^{n+1}$, or equivalently an $(n+1)$-multivector field $\Pi$, we may naturally associate to it a dual $(n+1)$-form $\omega$, and conversely, through equation $\Pi \lrcorner \omega=1$. In a fixed coordinate system $(x,y)$ such that
\[\Pi=\sigma(x,y)\partial_x\wedge \partial_y^n,\]
the dual form is then given by:
\[\omega=\frac{1}{\sigma(x,y)}dx\wedge dy^n=\sigma(x,y)(dx\wedge dy^n)^{-1}.\]
We call $\omega$ a Nambu (or Poisson for $n=1$) form. If $\Pi$ degenerates (vanishes) along the hypersurface $\Sigma=\{\sigma(x,y)=0\}$, it follows that the corresponding Nambu form will have logarithmic singularities along $\Sigma$, i.e. it will be an element of the module $\Omega^{n+1}(\log \Sigma)$. If $\Sigma$ is smooth, given say by the vanishing of the coordinate function $x$, $\Sigma=\{x=0\}$, then we can write:
\[\Pi=xg(x,y)\partial_x\wedge \partial_y^n\Longleftrightarrow \omega=\frac{1}{xg(x,y)}dx\wedge dy^n,\]
for some function $g$, $g(0)\ne 0$. We denote by $\Omega_*^{n+1}(\log \Sigma)$ the corresponding module of those logarithmic forms for which $g(0)=1$.

It follows from the above that classification of pairs $(\{\cdot,\cdots,\cdot\},f)$, or equivalently of $(\Pi,f)$, can be reduced to classification of pairs $(\omega,f)$, where $\omega$ is the dual Nambu form. To classify the latter, it turns out that it is more convenient to fix the pair $(f,\Sigma)$, where $\Sigma$ is the degeneracy hypersurface of the Nambu structure, and classify Nambu forms $\omega \in \Omega_*^{n+1}(\log \Sigma)$ by diffeomorphisms preserving the pair $(f,\Sigma)$. 

Singularities of pairs $(f,\Sigma)$ have been studied extensively by V. I. Arnol'd \cite{A1} and his collaborators (c.f. \cite{A2} for a collection of results). They are known in the literature as boundary singularities and they are defined by the conditions that both $f$ and its restriction $f|_{\Sigma}$ on $\Sigma$, have an isolated singularity at the origin (without excluding the case where $f$ might be non-singular, $df(0)\ne 0$, but non-transversal to the boundary). Below we present all the technical details needed for the classification of Nambu forms $\omega$ with respect to an isolated boundary singularity $(f,\Sigma)$. We start our exposition with the simpler (with less moduli) case $n\geq 2$ and in the next Section 3.2 we provide all the necessary modifications for the planar case $n=1$ as well.

\subsection{Case $n\geq 2$}

First we will need the following lemma, which is a logarithmic analog of the so called de Rham division lemma \cite{dR}.   

\begin{lem}
\label{logdrd}
Let $a \in \Omega^n(\log \Sigma)$ be a logarithmic form such that $df\wedge a=0$. Then there exists a logarithmic form $b\in \Omega^{n-1}(\log \Sigma)$ such that $a=df\wedge b$.
\end{lem}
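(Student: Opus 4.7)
The plan is to reduce the statement to two successive applications of the classical de Rham division lemma: one applied on the boundary hypersurface $\Sigma$ (using that $f|_\Sigma$ has an isolated singularity at the origin), and one applied on the ambient space (using that $f$ does). This splitting strategy is natural in view of the decomposition $\Omega^\bullet(\log \Sigma) = \tfrac{dx}{x}\wedge \Omega^{\bullet-1} + \Omega^\bullet$ recalled in the notation.

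I would begin by choosing coordinates so that $\Sigma = \{x=0\}$ and writing $a$ uniquely as
\[
a = \frac{dx}{x}\wedge \alpha + \beta,
\]
where $\alpha$ is a holomorphic $(n-1)$-form and $\beta$ a holomorphic $n$-form, both involving only the $dy_i$ (any $dx$ appearing in the logarithmic coefficient can be absorbed into $\beta$, making the representation unique in this form). Writing $df = f_x\,dx + d_yf$ with $d_yf = \sum_i f_{y_i}\,dy_i$, and noting that $d_yf\wedge \beta = 0$ for dimensional reasons, the hypothesis $df\wedge a = 0$ collapses to the single identity
\[
x f_x\,\beta = d_yf \wedge \alpha.
\]

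The first key move is to restrict this identity to $\Sigma$. The left hand side vanishes, leaving $d(f|_\Sigma)\wedge \alpha|_\Sigma = 0$ on $\Sigma \cong \mathbb{C}^n$. Since $(f,\Sigma)$ is an isolated boundary singularity, $f|_\Sigma$ has an isolated critical point at the origin, so the classical de Rham division lemma applied on $\Sigma$ produces a holomorphic $(n-2)$-form $\gamma_0$ with $\alpha|_\Sigma = d(f|_\Sigma)\wedge \gamma_0$. Lifting $\gamma_0$ to a pure $dy$-form $\gamma$ on $\mathbb{C}^{n+1}$ (for instance, by treating its $y$-coefficients as independent of $x$), one obtains $\alpha - d_yf\wedge \gamma = x\alpha'$ for some holomorphic $(n-1)$-form $\alpha'$.

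Next I would use $\gamma$ to clear the logarithmic part of $a$: setting $b_1 := -\tfrac{dx}{x}\wedge \gamma \in \Omega^{n-1}(\log\Sigma)$, a short computation gives $df\wedge b_1 = \tfrac{dx}{x}\wedge d_yf\wedge \gamma$, whence
\[
\tilde{a} := a - df\wedge b_1 = dx\wedge \alpha' + \beta \;\in\; \Omega^n
\]
is a genuinely holomorphic $n$-form still satisfying $df\wedge \tilde{a} = 0$. A second application of the classical de Rham division lemma, now on the ambient $\mathbb{C}^{n+1}$ and using the isolated critical point of $f$, furnishes a holomorphic $(n-1)$-form $b_2$ with $\tilde{a} = df\wedge b_2$. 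Then $b := b_1 + b_2 \in \Omega^{n-1}(\log\Sigma)$ satisfies $a = df\wedge b$, as required. The only real obstacle is the first de Rham step: the isolatedness of the critical locus of $f|_\Sigma$ is what guarantees the existence of $\gamma$, and without it the logarithmic pole of $a$ could not be cleared. Note also that the argument produces $\gamma$ as an $(n-2)$-form, so it genuinely requires $n\geq 2$, which is consistent with the planar case being treated separately in the next subsection.
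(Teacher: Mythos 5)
Your proof is correct, and it uses the same two essential ingredients as the paper's proof --- the classical de Rham division lemma applied once on the ambient space for $f$ and once on $\Sigma$ for $f|_{\Sigma}$ --- but organizes them in the opposite order and through a different decomposition. The paper multiplies the relation by $x$ to land in $\Omega^n(\Sigma)\subset\Omega^n$, divides in the ambient space first to get $a'=xa=df\wedge b'$, and only then invokes division on $\Sigma$ to correct $b'$ so that it vanishes on $\Sigma$ and can be divided back by $x$. You instead split $a=\tfrac{dx}{x}\wedge\alpha+\beta$ into its polar and holomorphic parts, use division on $\Sigma$ first (applied to the residue $\alpha|_{\Sigma}$) to clear the logarithmic pole, and finish with ambient division on the holomorphic remainder. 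Your route is slightly longer and coordinate-heavier, but it has the advantage of making the Poincar\'e residue of $a$ play an explicit role, which foreshadows the residue exact sequence the paper uses later for the logarithmic Brieskorn module; the paper's multiply-by-$x$ trick is more economical. Your closing observations --- that the isolatedness of the critical point of $f|_{\Sigma}$ is the crucial hypothesis, and that the appearance of an $(n-2)$-form $\gamma$ ties the argument to $n\geq 2$ --- are both accurate and consistent with the paper treating the planar case separately.
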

\begin{proof}
Fix coordinates such that $\Sigma=\{x=0\}$, and multiply the relation $df\wedge a=0$ by $x$. We obtain $df\wedge a'=0$, where $a'=xa \in \Omega^n(\Sigma)$ is a form vanishing on $\Sigma$. Since $\Omega^n(\Sigma)\subset \Omega^n$, we know from the ordinary de Rham division lemma that there exists a form $b'\in \Omega^{n-1}$ such that $a'=df\wedge b'$. Now, since $a'|_{\Sigma}=0$ it follows that $b'|_{\Sigma}=0$ as well. Indeed, if $b'|_{\Sigma} \ne 0$, then since $df|_{\Sigma}\wedge b'|_{\Sigma}=0$, we have again by the ordinary de Rham division lemma that there exists some $\gamma_{\Sigma} \in \Omega^{n-2}_{\Sigma}$ such that $b'|_{\Sigma}=df|_{\Sigma}\wedge \gamma_{\Sigma}$. Changing now $b'$ with $b''=b'-df\wedge \gamma_{\Sigma}$ we obtain that $a'=df\wedge b'=df\wedge b''$, where now $b''\in \Omega^{n-1}(\Sigma)$, i.e. it  vanishes on $\Sigma$. Since $a'=xa$, we obtain by division with $x$ that $a=df\wedge b$, where $b=b''/x\in \Omega^{n-1}(\log \Sigma)$ is logarithmic, and this finishes the proof. 
\end{proof}

Denote now by $\mathcal{R}(f,\Sigma)$ the isotropy group of the pair $(f,\Sigma)$ (i.e. diffeomorphisms preserving the boundary singularity $(f,\Sigma)$), and by $\Theta(f,\Sigma)$ its tangent space at the identity:
\[\Theta(f,\Sigma)=\{V\in \Theta/L_V(f)=0, \hspace{0.3cm} L_V(x)\subseteq <x>\},\]
where we have chosen coordinates $(x,y)$ such that $\Sigma=\{x=0\}$. The group $\mathcal{R}(f,\Sigma)$ acts naturally on the space $\Omega^{n+1}_*(\log \Sigma)$ of Nambu forms. Denote by $\mathcal{R}(\omega)$ the orbit of $\omega$ and $T(\omega)$ its tangent space at $\omega$:
\[T(\omega)=\{L_V\omega/V\in \Theta(f,\Sigma)\}=\{L_V\omega/L_V(f)=0, \hspace{0.3cm} L_V(x)\subseteq <x>\}.\]
This submodule can be identified with the space of infinitesimally trivial deformations of $\omega$ by $\mathcal{R}(f,\Sigma)$-equivalence, and thus the quotient 
\[\mathcal{D}_{f,\Sigma}(\omega):=\frac{\Omega^{n+1}(\log \Sigma)}{T(\omega)}\]
can be identified with the module of non-trivial infinitesimal deformations.
  
\begin{lem}
\label{logdm}
The infinitesimal deformation module $\mathcal{D}_{f,\Sigma}(\omega)$ of a Nambu form $\omega \in \Omega_*^{n+1}(\log \Sigma)$ is isomorphic to the module:
\[H''_{f}(\log \Sigma):=\frac{\Omega^{n+1}(\log \Sigma)}{df\wedge d\Omega^{n-1}(\log \Sigma)}.\]
\end{lem}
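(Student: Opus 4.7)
The plan is to exploit the fact that $\omega$ is a top-degree form on $\mathbb{C}^{n+1}$, so $d\omega=0$ automatically, and Cartan's formula collapses to $L_V\omega=d(i_V\omega)$. Computing $T(\omega)$ therefore reduces to two tasks: identifying the image of $\Theta(f,\Sigma)$ under $V\mapsto i_V\omega$ inside $\Omega^n(\log\Sigma)$, and then applying $d$.

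First I would establish the key isomorphism
\[ i_{\cdot}\omega:\Theta(\Sigma)\xrightarrow{\ \sim\ }\Omega^n(\log\Sigma),\qquad V\mapsto i_V\omega. \]
In coordinates $(x,y)$ with $\Sigma=\{x=0\}$ and $\omega=\frac{1}{xg}dx\wedge dy^n$, $g(0)=1$, a vector field tangent to $\Sigma$ has the form $V=xh\,\partial_x+\sum_j b_j\partial_{y_j}$, and a direct computation gives
\[ i_V\omega=\frac{h}{g}\,dy^n-\sum_{j=1}^n(-1)^{j-1}\frac{b_j}{g}\,\frac{dx}{x}\wedge dy_1\wedge\cdots\widehat{dy_j}\cdots\wedge dy_n. \]
Because $g$ is a unit, the coefficients $h,b_1,\dots,b_n$ are recovered uniquely from any element of $\Omega^n(\log\Sigma)$, which is free of rank $n+1$ over $\mathcal{O}$ with basis $dy^n$ and $\frac{dx}{x}\wedge dy_1\wedge\cdots\widehat{dy_j}\cdots\wedge dy_n$.

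Next I would translate the condition $L_V(f)=0$. Since $df\wedge\omega$ is an $(n+2)$-form on an $(n+1)$-dimensional space it vanishes identically, so the derivation property of $i_V$ gives
\[ 0=i_V(df\wedge\omega)=V(f)\,\omega-df\wedge i_V\omega. \]
Thus $V(f)=0$ is equivalent to $df\wedge i_V\omega=0$, and by the logarithmic de Rham division lemma (Lemma \ref{logdrd}) this in turn is equivalent to $i_V\omega=df\wedge\eta$ for some $\eta\in\Omega^{n-1}(\log\Sigma)$. Combined with the isomorphism above, the image of $\Theta(f,\Sigma)$ under $V\mapsto i_V\omega$ is exactly $df\wedge\Omega^{n-1}(\log\Sigma)$.

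Applying the exterior derivative then gives
\[ T(\omega)=d\bigl(df\wedge\Omega^{n-1}(\log\Sigma)\bigr)=-\,df\wedge d\Omega^{n-1}(\log\Sigma), \]
and taking the quotient yields $\mathcal{D}_{f,\Sigma}(\omega)=\Omega^{n+1}(\log\Sigma)/T(\omega)=H''_f(\log\Sigma)$, as required. The only real subtlety is the use of Lemma \ref{logdrd}: once the logarithmic division result and the identification $\Theta(\Sigma)\cong\Omega^n(\log\Sigma)$ via the non-vanishing top form $\omega$ are in hand, the rest of the argument is formal.
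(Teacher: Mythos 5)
Your argument is correct and follows essentially the same route as the paper's proof: Cartan's formula reduces everything to identifying the image of $\Theta(f,\Sigma)$ under $V\mapsto i_V\omega$, which is pinned down by combining the logarithmic de Rham division lemma with the pairing $\Theta(\Sigma)\cong\Omega^n(\log\Sigma)$ induced by $\omega$. The only difference is that you write out the coordinate computation verifying that this pairing is perfect, a point the paper states as "easy to verify"; this is a welcome addition but not a different proof.
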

\begin{proof}
It suffices to show the isomorphism:
\[T(\omega)\cong df\wedge d\Omega^{n-1}(\log \Sigma).\]
Let $V\in \Theta(f,\Sigma)$. Then 
\[0=L_V(f)\omega=df\wedge (V\lrcorner \omega),\]
and by the logarithmic de Rham division Lemma \ref{logdrd}, there exists $a\in \Omega^{n-1}(\log \Sigma)$ such that:
\[V\lrcorner \omega=df\wedge a.\]
It follows then from Cartan's formula that:
\[L_V\omega=df\wedge da.\]
Conversely, let $a\in \Omega^{n-1}(\log \Sigma)$ and consider $df\wedge a\in df\wedge \Omega^{n-1}(\log \Sigma)\subset \Omega^{n}(\log \Sigma)$. Since, as it is easy to verify, $\omega$ defines a perfect pairing between $\Theta(\Sigma)$ and $\Omega^{n}(\log \Sigma)$, there exists a vector field $V\in \Theta(\Sigma)$ such that:
\[V\lrcorner \omega=df\wedge a.\]
In fact, $V\in \Theta(f)$ as well, since multiplication by $df\wedge$ in the equation above gives:
\[df\wedge (V\lrcorner \omega)=L_V(f)\omega=0\Longrightarrow L_V(f)=0.\]
From this it follows that $L_V\omega=df\wedge da$, and the lemma is proved.   
\end{proof}

The infinitesimal deformation module $H''_f(\log \Sigma)$ introduced above is an important object associated to the pair $(f,\Sigma)$ (it is independent of $\omega$ by definition); it has a natural $\mathbb{C}\{t\}$-module structure with multiplication induced by $f=t$, and it can be viewed as a logarithmic analog of the ordinary Brieskorn module $H''_f$ associated to an isolated singularity $f$ (c.f. \cite{B}). In order to state the main theorem concerning the finiteness and freeness of this logarithmic Brieskorn module (the logarithmic analog of the so-called Brieskorn-Sebastiani theorem \cite{B},\cite{Se}), recall (c.f. \cite{A1}) that given an isolated boundary singularity $(f,\Sigma)$, the multiplicity of its critical point (the relative Milnor number) is the number 
\[\mu_{\Sigma}(f)=\mu(f)+\mu(f|_{\Sigma}),\]
i.e. the sum of Milnor numbers of the function $f$ and of its restriction $f|_{\Sigma}$ on the boundary $\Sigma$. It can be interpreted analytically as the $\mathbb{C}$-dimension of the relative Milnor module:
\[\Omega_f(\Sigma):=\frac{\Omega^{n+1}}{df\wedge \Omega^n(\Sigma)}\cong Q_{f,\Sigma}:=\frac{\mathcal{O}_{n+1}}{<x\partial_xf, \partial_{y_1}f,\cdots,\partial_{y_n}f>},\]
(where the last isomorphism is obtained by division with a volume form $dx\wedge dy^n$ in coordinates $(x,y)$ for which $\Sigma=\{x=0\}$).  Notice that division by $x$ in the above Milnor module $\Omega_f(\Sigma)$, gives also an isomorphism of $\mathbb{C}$-vector spaces:
\[\Omega_f(\Sigma)\stackrel{\cdot\frac{1}{x}}{\cong} \Omega_f(\log \Sigma):=\frac{\Omega^{n+1}(\log \Sigma)}{df\wedge \Omega^n(\log \Sigma)},\]
the term on the right being interpreted as a logarithmic Milnor module, which is again a $\mathbb{C}$-vector space of dimension $\mu_{\Sigma}(f)$.  

To interpret the relative Milnor number topologically, one may consider a standard representative $(f,\Sigma):X\rightarrow S$ of the germ $(f,\Sigma)$ (as for example in \cite{S}, see also \cite{K} for more details). Then, $\mu_{\Sigma}(f)$ is equal to the rank of the relative homology group $H_n(X_t,X_t\cap \Sigma;\mathbb{Z})$ of the pair of Milnor fibers $(X_t,X_t\cap \Sigma)$ of $(f,f|_{\Sigma})$ c.f. \cite{A1}, and by duality, it is also equal to the rank of the homology group $H_n(X_t\setminus X_t\cap \Sigma;\mathbb{Z})$ of the complement of the Milnor fiber $X_t\cap \Sigma$ of the restriction $f|_{\Sigma}$ inside the Milnor fiber $X_t$ of $f$  c.f. \cite{S}. In fact, viewing this last homology group with coefficients over the complex numbers, one easily obtains a standard Gysin-Thom short exact sequence of homological vector bundles over the punctured disc $S^*=S\setminus 0$:
\begin{equation}
\label{GT}
0\rightarrow \cup_{t\in S^*}H_{n-1}(X_t\cap \Sigma;\mathbb{C})\stackrel{\mathcal{L}}{\rightarrow} \cup_{t\in S^*}H_n(X_t\setminus X_t\cap \Sigma;\mathbb{C})\rightarrow \cup_{t\in S^*}H_{n}(X_t;\mathbb{C})\rightarrow 0,
\end{equation}
where $\mathcal{L}$ is the so-called Leray tube operator (it inflates a cycle of $X_t\cap \Sigma$ inside the complement $X_t\setminus X_t\cap \Sigma)$. It follows that the locally constant sections which generate the homology bundle $\cup_{t\in S^*}H_n(X_t\setminus X_t\cap \Sigma;\mathbb{C})$ are exactly the vanishing cycles of $X_t$ (which are the generators of $\cup_{t\in S^*}H_{n}(X_t;\mathbb{C})$) and (the image by $\mathcal{L}$ of) the vanishing cycles of the restriction $X_t\cap \Sigma$ (which are the generators of $\cup_{t\in S^*}H_{n-1}(X_t\cap \Sigma;\mathbb{C})$). Their number is then exactly equal to $\mu_{\Sigma}(f)=\mu(f)+\mu(f|_{\Sigma})$.  
\begin{thm}
\label{thm3}
The logarithmic Brieskorn module $H''_f(\log \Sigma)$ is a free $\mathbb{C}\{t\}$-module of rank $\mu_{\Sigma}(f)$:
\[H''_{f}(\log \Sigma)\cong \mathbb{C}\{t\}^{\mu_{\Sigma}(f)}.\]
Moreover, it is a natural extension at the origin of the sheaf of sections of the cohomological Milnor bundle $\cup_{t\in S^*}H^n(X_t\setminus X_t\cap \Sigma;\mathbb{C})$.
\end{thm}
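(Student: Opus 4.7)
The plan is to follow the classical Brieskorn--Sebastiani strategy, with Lemma \ref{logdrd} replacing the ordinary de Rham division lemma at every step; having this logarithmic analog in hand is exactly what makes the argument go through in the boundary singularity setting.

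First I would prove finite generation over $\mathbb{C}\{t\}$. The $\mathcal{O}$-module $H''_f(\log\Sigma)$ is coherent, so by a Weierstrass-type preparation (Malgrange's relative preparation theorem) it suffices to show $H''_f(\log\Sigma)/f\cdot H''_f(\log\Sigma)$ is finite-dimensional over $\mathbb{C}$. Projecting further onto the logarithmic Milnor module $\Omega_f(\log\Sigma)$ (identified in the excerpt with $\Omega_f(\Sigma)$ via division by $x$, hence of dimension $\mu_\Sigma(f)$) and using Lemma \ref{logdrd} to control the kernel of this projection, I would bound the $\mathbb{C}$-dimension of the fiber by $\mu_\Sigma(f)$; preparation then yields finite generation with at most $\mu_\Sigma(f)$ generators.

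Second, freeness. By the structure theorem for finitely generated $\mathbb{C}\{t\}$-modules, it suffices to verify $t$-torsion-freeness, i.e.\ if $f\omega \in df\wedge d\Omega^{n-1}(\log\Sigma)$, then already $\omega\in df\wedge d\Omega^{n-1}(\log\Sigma)$. I would mimic Sebastiani's argument: iteratively extract primitives via Lemma \ref{logdrd}, the crucial point being that the logarithmic division lemma keeps the pole order along $\Sigma$ bounded by one at every stage of the recursion. Combined with the fiber bound above, freeness forces the rank to equal $\dim_{\mathbb{C}} H''_f(\log\Sigma)/f\cdot H''_f(\log\Sigma) = \mu_\Sigma(f)$.

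For the cohomological assertion, I would define the period map sending $[\omega]\in H''_f(\log\Sigma)$ to the section $t\mapsto [\omega/df\,|_{X_t}]$ of the cohomology bundle $\cup_{t\in S^*} H^n(X_t\setminus X_t\cap\Sigma;\mathbb{C})$. Well-definedness on $S^*$ reduces to the holomorphic Poincar\'e lemma on $X_t\setminus X_t\cap\Sigma$, where logarithmic forms become holomorphic and $df\wedge d\alpha/df$ restricts to an exact form. The Gysin--Thom sequence (\ref{GT}) shows both sides are locally free of rank $\mu_\Sigma(f)$ over $S^*$, so the period map is a generic isomorphism; freeness at $t=0$ then realizes $H''_f(\log\Sigma)$ as the canonical coherent extension of this flat bundle across the origin. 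The main obstacle is the freeness step: Sebastiani's recursive extraction of primitives must be carried out while controlling the logarithmic pole order along $\Sigma$ at every recursion, which is precisely where Lemma \ref{logdrd} replaces its ordinary counterpart.
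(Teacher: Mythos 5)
Your strategy is genuinely different from the paper's. The paper does not redo Brieskorn--Sebastiani in the logarithmic category at all: it applies the Poincar\'e residue map $R$ to the sequence (\ref{prses}), checks that $R$ commutes with $d$, with $df\wedge$, and with the $\mathbb{C}\{t\}$-action, and thereby obtains the short exact sequence (\ref{sesbm}) of $\mathbb{C}\{t\}$-modules $0\rightarrow H''_f\rightarrow H''_f(\log\Sigma)\rightarrow H''_{f|_\Sigma}\rightarrow 0$. The classical Brieskorn--Sebastiani theorem applied to the two outer terms (for $f$ and for $f|_\Sigma$, both isolated singularities) immediately gives freeness of the middle term and the rank $\mu(f)+\mu(f|_\Sigma)=\mu_\Sigma(f)$; the same sequence, read through the Gelfand--Leray forms $\omega/df$ and Stein-ness of the fibers, is identified with the dual of the Gysin--Thom sequence (\ref{GT}), which gives the extension statement. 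Lemma \ref{logdrd} is not used in this proof at all (it is needed for Lemma \ref{logdm}, i.e.\ for identifying $H''_f(\log\Sigma)$ with the deformation module). Your route buys independence from the classical theorem but at the cost of reproving it twice over.

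As written, though, your two hard steps are asserted rather than proved, and one of them is stated incorrectly. For the fiber bound: $H''_f(\log\Sigma)/fH''_f(\log\Sigma)$ does not project onto $\Omega_f(\log\Sigma)$ (which is not killed by $f$); it projects onto the logarithmic Tjurina module $\Omega_{X_0}(\log\Sigma)$ of dimension $\tau_\Sigma(f)$, and the kernel of that projection is the left-hand term of the sequence (\ref{qhses}), namely $df\wedge\Omega^n(\log\Sigma)/(df\wedge d\Omega^{n-1}(\log\Sigma)+f\Omega^{n+1}(\log\Sigma))$. Bounding its dimension by $q_\Sigma(f)=\mu_\Sigma(f)-\tau_\Sigma(f)$ is exactly the content of Remark \ref{pl} and rests on identifying it with $H^n(\widetilde\Omega^\bullet_{X_0}(\log\Sigma))$ and invoking Greuel's computations; it does not follow from Lemma \ref{logdrd} alone. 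For torsion-freeness, ``iteratively extract primitives via Lemma \ref{logdrd}'' is not yet an argument: Sebastiani's recursion needs division statements in degrees other than $n$ and for closed forms (producing primitives, not just factors of $df$), and the paper only proves the logarithmic division lemma for $n$-forms annihilated by $df\wedge$. Neither gap is fatal --- the program can very likely be completed --- but in its current form the proposal defers precisely the two points that constitute the theorem, whereas the residue sequence makes both of them free of charge.
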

\begin{proof}
Fix equation $x$ for $\Sigma$ and let $i:\Sigma \hookrightarrow \mathbb{C}^{n+1}$, $i(y)=(0,y)$ be the corresponding embedding. Consider the Poincar\'e residue short exact sequence:
\[0\rightarrow \Omega^{\bullet}\rightarrow \Omega^{\bullet}(\log \Sigma)\stackrel{R}{\rightarrow} i_*\Omega^{\bullet-1}_{\Sigma}\rightarrow 0,\]
where the residue map is defined as follows: for each $p\geq 1$, and  $\omega \in \Omega^{p}(\log \Sigma)$, let $a\in \Omega^{p-1}$ be such that $x\omega=dx\wedge a$. Then $R(\omega)=a|_{\Sigma}$.
For $p=n+1$ we have thus a short exact sequence:
\begin{equation}
\label{prses}
0\rightarrow \Omega^{n+1}\rightarrow \Omega^{n+1}(\log \Sigma)\stackrel{R}{\rightarrow} i_*\Omega^n_{\Sigma}\rightarrow 0.
\end{equation}
The morphism $R$ commutes with the differentials $d$ and, as it is easy to check, it also commutes with $df\wedge$. Moreover, it is $\mathbb{C}\{t\}$-linear (also straightforward) and thus we obtain, after passing to quotients, a short exact sequence of $\mathbb{C}\{t\}$-modules:
\begin{equation}
\label{sesbm}
0\rightarrow H''_f\rightarrow H''_f(\log \Sigma)\stackrel{R}{\rightarrow}H''_{f|_{\Sigma}}\rightarrow 0,
\end{equation} 
where the modules on the left and on the right are the ordinary Brieskorn modules of $f$ and of its restriction $f|_{\Sigma}$ respectively:
 \[H''_f:=\frac{\Omega^{n+1}}{df\wedge d\Omega^{n-1}}, \quad H''_{f|_{\Sigma}}:= \frac{\Omega^{n}_{\Sigma}}{df|_{\Sigma}\wedge d\Omega^{n-2}_{\Sigma}}.\]  
By the Brieskorn-Sebastiani theorem \cite{B}, \cite{Se} these are free $\mathbb{C}\{t\}$-modules of ranks equal to the corresponding Milnor numbers $\mu(f)$ and $\mu(f|_{\Sigma})$ respectively, and thus the module $H''_f(\log \Sigma)$ in the middle is also free of rank $\mu_{\Sigma}(f)=\mu(f)+\mu(f|_{\Sigma})$. This proves the finiteness and freeness part of the theorem. The last part is also obtained immediately as follows: sheafifying the short exact sequence (\ref{sesbm}) and taking the corresponding Gelfand-Leray residue forms $\omega/df$ (viewed as sections over $S^*$), we obtain an isomorphic short exact sequence of de Rham cohomology bundles:
\[0\rightarrow \cup_{t\in S^*}H^n_{dR}(X_t;\mathbb{C})\rightarrow \cup_{t\in S^*}H^n_{dR}(X_t\setminus X_t\cap \Sigma;\mathbb{C})\stackrel{R}{\rightarrow} \cup_{t\in S^*}H^{n-1}_{dR}(X_t\cap \Sigma;\mathbb{C})\rightarrow 0.\]
Then, since $X_t$ and $X_t\cap \Sigma$ are Stein, the de Rham integration morphism:
\[I(t):=\int_{\gamma(t)}\frac{\omega}{df},\] 
identifies the de Rham cohomology short exact sequence above, with the dual of the Thom-Gysin short exact sequence (\ref{GT}), which is exactly the short exact sequence of cohomology bundles:
 \[0\rightarrow \cup_{t\in S^*}H^n(X_t;\mathbb{C})\rightarrow \cup_{t\in S^*}H^n(X_t\setminus X_t\cap \Sigma;\mathbb{C})\rightarrow \cup_{t\in S^*}H^{n-1}(X_t\cap \Sigma;\mathbb{C})\rightarrow 0.\]
This proves the theorem.
\end{proof}

In particular, fixing a basis $\{\omega_i\}_{i=1}^{\mu_{\Sigma}(f)}$ of the logarithmic Brieskorn module $H''_f(\log \Sigma)$, the corresponding geometric sections $\{\omega_i/df\}_{i=1}^{\mu_{\Sigma}(f)}$ form a trivilisation of the cohomology bundle $\cup_{t\in S^*}H^n(X_t\setminus X_t\cap \Sigma;\mathbb{C})$. In such a fixed basis, if $\omega$ is any logarithmic form then there are uniquely defined holomorphic functions $\{c_i(t)\}_{i=1}^{\mu_{\Sigma}(f)}$ such that 
\[\omega=\sum_{i=1}^{\mu_{\Sigma}(f)}c_i(f)\omega_i \mod(df\wedge d\Omega^{n-1}(\log \Sigma)).\]
If $\gamma_j(t)\in \cup_{t\in S^*}H_n(X_t\setminus X_t\cap \Sigma;\mathbb{C})$ is an element of a basis of  locally constant sections (e.g. a vanishing cycle) then there exists a decomposition of the integral $I_j(t)=\int_{\gamma_j(t)}\omega/df$ as:
\[I_j(t)=\sum_{i=1}^{\mu_{\Sigma}(f)}c_i(t)P_{ij}(t), \quad P_{ij}(t)=\int_{\gamma_j(t)}\frac{\omega_i}{df},\]
where the matrix $P(t)=(P_{ij}(t))_{1\leq i,j\leq \mu_{\Sigma(f)}}$ is the corresponding period matrix. It follows by Cramer's rule that each function $c_i(t)$ can be expressed in terms of integrals along the vanishing cycles, as:
\[c_i(t)=\frac{\det \tilde{P}_i(t)}{\det P(t)},\]
where the matrix $\tilde{P}_i(t)$ is obtained by the period matrix $P(t)$ after replacing its $i$'th column with the vector $I(t)=(I_1(t),\cdots,I_{\mu_{\Sigma}(f)}(t))^T$.  

It is obvious from the construction above that the integrals $I(t)$, as well as the functions $c(t)=(c_1(t),\cdots,c_{n+1}(t))$, form a set of functional invariants of the form's $\omega$ $\mathcal{R}(f,\Sigma)$-equivalence class. The theorem below verifies that this set is in fact a complete set of functional invariants for the classification problem:
\begin{thm}
\label{hm}
Any two Nambu forms $\omega, \omega' \in \Omega_*^{n+1}(\log \Sigma)$ are $\mathcal{R}(f,\Sigma)$-equivalent if and only if they define the same class in the logarithmic Brieskorn module $H''_f(\log \Sigma)$. In particular, any Nambu form $\omega'$ is equivalent to its representative in the logarithmic Brieskorn module:
\[\omega=\sum_{i=1}^{\mu_{\Sigma}(f)}c_i(f)\omega_i.\]
The holomorphic functions $c_i(t)$ are functional moduli (for the fixed basis $\{\omega_i\}_{i=1}^{\mu_{\Sigma}(f)}$ of $H''_f(\log \Sigma)$). 
\end{thm}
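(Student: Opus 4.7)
The plan is the homotopy (Moser) method, leveraging the infinitesimal identification $T(\omega)=df\wedge d\Omega^{n-1}(\log\Sigma)$ established in the proof of Lemma \ref{logdm}. For the easy direction, if $\omega'=\phi^{*}\omega$ with $\phi\in\mathcal{R}(f,\Sigma)$ tangent to the identity, I would join $\phi$ to the identity by a one-parameter family $\phi_s$ generated by $V_s\in\Theta(f,\Sigma)$ and write
$$\omega'-\omega=\int_0^{1}\phi_s^{*}(L_{V_s}\omega)\,ds.$$
Each integrand lies in $df\wedge d\Omega^{n-1}(\log\Sigma)$ by Lemma \ref{logdm}, and this submodule is preserved by $\mathcal{R}(f,\Sigma)$ (since such diffeomorphisms preserve $f$, $\Sigma$, and hence the logarithmic de Rham complex), so $\omega'-\omega$ belongs to it as well, giving the same class in $H''_f(\log\Sigma)$.

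\textbf{Sufficiency by Moser.} Suppose $\omega_1-\omega_0=df\wedge d\alpha$ with $\alpha\in\Omega^{n-1}(\log\Sigma)$, where $\omega_0=\omega$, $\omega_1=\omega'$. I would set $\omega_s:=\omega_0+s(\omega_1-\omega_0)$. Writing $\omega_i=(h_i/x)\,dx\wedge dy^n$ with $h_i(0)=1$, the normalization $h_s(0)=1$ is affine, hence preserved along the path, so each $\omega_s\in\Omega^{n+1}_*(\log\Sigma)$ and still induces the perfect pairing $\Theta(\Sigma)\cong\Omega^n(\log\Sigma)$ used in Lemma \ref{logdm}. Through that pairing, define $V_s\in\Theta(\Sigma)$ by $V_s\lrcorner\omega_s=df\wedge\alpha$. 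Wedging with $df$ and using $V_s\lrcorner(df\wedge\omega_s)=0$ (the form has degree $n+2$) yields $L_{V_s}(f)\,\omega_s=df\wedge(V_s\lrcorner\omega_s)=0$, so $V_s\in\Theta(f,\Sigma)$. Since $\omega_s$ is top degree, $d\omega_s=0$ and Cartan's formula gives
$$L_{V_s}\omega_s=d(V_s\lrcorner\omega_s)=d(df\wedge\alpha)=-df\wedge d\alpha=-\dot{\omega}_s,$$
which is exactly the Moser equation $\tfrac{d}{ds}(\phi_s^{*}\omega_s)=0$. Integrating the flow of $V_s$ then produces $\phi_s\in\mathcal{R}(f,\Sigma)$ with $\phi_1^{*}\omega'=\omega$. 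For the second assertion, I would apply this equivalence to $\omega'$ and its unique representative $\sum_i c_i(f)\omega_i$ modulo $df\wedge d\Omega^{n-1}(\log\Sigma)$ in the free $\mathbb{C}\{t\}$-basis of $H''_f(\log\Sigma)$ provided by Theorem \ref{thm3}; the coefficients $c_i(t)$ are functional moduli precisely because they record the class of $\omega'$ in this fixed basis.

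\textbf{Expected main obstacle.} The delicate step is integrating the time-dependent $V_s$ to a family of germ diffeomorphisms defined on a common neighborhood of the origin and fixing it, which requires $V_s(0)=0$, i.e. $df(0)\wedge\alpha(0)=0$. At any boundary singularity with $H''_f(\log\Sigma)\ne 0$ one typically has $df(0)=0$ (the $A_1$-type case and its higher analogues), making this automatic. In the residual case $df(0)\ne 0$, either $\mu_\Sigma(f)=0$ and there is nothing to prove, or else one must exploit the ambiguity $\alpha\mapsto\alpha+\gamma$ for closed $\gamma\in\Omega^{n-1}(\log\Sigma)$ to normalize $\alpha(0)$ into the kernel of $df(0)\wedge(\,\cdot\,)$. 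Performing this adjustment within the logarithmic complex, and verifying that the resulting flow stays inside $\mathcal{R}(f,\Sigma)$ throughout $s\in[0,1]$, is where the principal technical effort lies.
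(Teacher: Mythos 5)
Your Moser half coincides with the paper's proof: the same path $\omega_s=\omega+s\,df\wedge d\alpha$, the same defining equation $V_s\lrcorner\omega_s=df\wedge\alpha$ via the perfect pairing between $\Theta(\Sigma)$ and $\Omega^n(\log\Sigma)$, and the same verification that $V_s\in\Theta(f,\Sigma)$ solves the homological equation $L_{V_s}\omega_s=-\dot\omega_s$. (The paper states this in three lines and, like you, does not actually verify that the time-dependent flow integrates to a germ of biholomorphism fixing the origin and tangent to the identity; the ``expected main obstacle'' you flag is a legitimate concern that the paper simply passes over, so you are, if anything, more careful here, though you leave it unresolved.)

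The genuine gap is in your ``only if'' direction. You assume that any $\phi\in\mathcal{R}(f,\Sigma)$ tangent to the identity can be joined to the identity by a path $\phi_s$ that stays inside $\mathcal{R}(f,\Sigma)$, i.e.\ is generated by a time-dependent $V_s\in\Theta(f,\Sigma)$; only then does your formula $\omega'-\omega=\int_0^1\phi_s^*(L_{V_s}\omega)\,ds$ place the difference in $df\wedge d\Omega^{n-1}(\log\Sigma)$. This is not automatic: the obvious interpolation $\phi_s(z)=s^{-1}\phi(sz)$ preserves $\Sigma=\{x=0\}$ but does not preserve $f$ unless $f$ is homogeneous, and surjectivity of the ``exponential'' from $\Theta(f,\Sigma)$ onto the group of isotropies tangent to the identity is precisely the kind of statement that requires proof in the holomorphic germ category. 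The paper avoids this entirely by arguing topologically: since $\Phi$ is tangent to the identity it induces the identity on each fiber $H^n(X_t\setminus X_t\cap\Sigma;\mathbb{C})$, so the Gelfand--Leray forms $\omega/df$ and $\omega'/df$ have the same periods over all cycles; by Theorem \ref{thm3} the module $H''_f(\log\Sigma)$ is free and is an extension of the sheaf of sections of the cohomology bundle, so the period map is injective and equal periods force $[\omega]=[\omega']$ in $H''_f(\log\Sigma)$. To repair your version you must either prove the path-lifting claim for $\mathcal{R}(f,\Sigma)$ or replace that step by the period argument.
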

\begin{proof}
The one direction is immediate: if the forms $\omega$ and $\omega'$ are equivalent then their Poincar\'e residues $\omega/df$, $\omega'/df$ define the same cohomology class in each fiber $H^n(X_t\setminus X_t\cap \Sigma;\mathbb{C})$ of the cohomological Milnor fibration, in a sufficiently small neighbourhood of the origin. Indeed, since the diffeomorphism $\Phi \in \mathcal{R}(f,\Sigma)$ realising the equivalence is tangent to the identity, it induces the identity on the cohomology of $X_t\setminus X_t\cap \Sigma$ with constant coefficients (because the latter is topological). It follows from this that  the diffeomorphism $\Phi$ induces also the identity in $H''_f(\log \Sigma)$. The other direction follows from an application of Moser's homotopy method: connect $\omega$, $\omega'$ by a path $\omega_s=\omega+sdf\wedge da$, $s\in [0,1]$, $a\in \Omega^{n-1}(\log \Sigma)$. Then the vector field $V_s\in \Theta(f,\Sigma)$ defined by the equation:
\[V_s\lrcorner \omega_s=a\wedge df,\]
is a solution of the homological equation:
\[L_{V_s}\omega_s=df\wedge d(-a)\]
and the time 1-map of $V_s$ is the desired diffeomorphism. 
\end{proof}

It follows from the above theorem that in order to obtain exact classification results for pairs  $(\omega,f)$ one has to be able to  find explicitly a basis of the logarithmic Brieskorn module. By Nakayama's lemma, such a basis is obtained by lifting a basis of the corresponding $\mu_{\Sigma}(f)$-dimensional $\mathbb{C}$-vector space:
\[\frac{H''_f(\log \Sigma)}{fH''_f(\log \Sigma)}\cong \frac{\Omega^{n+1}(\log \Sigma)}{df\wedge d\Omega^{n-1}(\log \Sigma)+f\Omega^{n+1}(\log \Sigma)}.\]
To obtain in turn a basis of the latter vector space can in general be a difficult problem. An exception is the case where the pair $(f,\Sigma)$ is quasihomogeneous i.e. such that both $f$ and $f|_{\Sigma}$ are quasihomogeneous with respect to the same positive rational weights $w=(w_1,\cdots,w_{n+1})\in \mathbb{Q}^{n+1}_+$. This implies in particular that the Euler vector field $E_w$ of $f$ is also tangent to $\Sigma=\{x=0\}$:
\[E_w(f)=f, \quad E_w(x)=w_1x,\] 
and thus the function $f$ belongs to its relative (resp. logarithmic) Jacobian ideal:
\begin{equation}
\label{eqqh}
f\Omega^{n+1}\subseteq df\wedge \Omega^n(\Sigma)\Longleftrightarrow f\Omega^{n+1}(\log \Sigma)\subseteq df\wedge \Omega^n(\log \Sigma).
\end{equation}

\begin{prop}
\label{p1}
Let $(f,\Sigma)$ be a quasihomogeneous boundary singularity. Then there exists an isomorphism of $\mathbb{C}$-vector spaces:
\[\frac{H''_f(\log \Sigma)}{fH''_f(\log \Sigma)}\cong \Omega_f(\log \Sigma):=\frac{\Omega^{n+1}(\log \Sigma)}{df\wedge \Omega^n(\log \Sigma)}.\]
\end{prop}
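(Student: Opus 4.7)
Unfolding the left-hand side: since $H''_f(\log \Sigma)=\Omega^{n+1}(\log \Sigma)/(df\wedge d\Omega^{n-1}(\log \Sigma))$, the quotient by its submodule $fH''_f(\log \Sigma)$ is represented by
\[
\frac{\Omega^{n+1}(\log \Sigma)}{df\wedge d\Omega^{n-1}(\log \Sigma)+f\Omega^{n+1}(\log \Sigma)}.
\]
Comparing with the target $\Omega_f(\log \Sigma)$, the proposition reduces to the submodule identity
\[
df\wedge d\Omega^{n-1}(\log \Sigma)+f\Omega^{n+1}(\log \Sigma)=df\wedge \Omega^n(\log \Sigma).
\]
The inclusion $\subseteq$ is immediate: the first summand lies in $df\wedge \Omega^n(\log \Sigma)$ because $d\Omega^{n-1}(\log \Sigma)\subseteq \Omega^n(\log \Sigma)$, and the second lies in it by the quasihomogeneity hypothesis \eqref{eqqh}.

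For the reverse inclusion I would exploit the Euler field $E_w$. Since $E_w(f)=f$ and $E_w(x)=w_1x$, the field $E_w$ is tangent to $\Sigma$ and hence, by Saito's criterion, preserves the logarithmic complex; in particular $E_w\lrcorner \alpha\in \Omega^{n-1}(\log \Sigma)$ for every $\alpha\in \Omega^n(\log \Sigma)$. The engine of the argument is the identity
\[
df\wedge L_{E_w}(\alpha)=df\wedge d(E_w\lrcorner \alpha)+f\, d\alpha,
\]
obtained by applying Cartan's formula $L_{E_w}\alpha=d(E_w\lrcorner \alpha)+E_w\lrcorner d\alpha$, wedging with $df$, and using the top-degree vanishing $df\wedge d\alpha=0$ in $\mathbb{C}^{n+1}$, which via $E_w\lrcorner(df\wedge d\alpha)=0$ rewrites $df\wedge (E_w\lrcorner d\alpha)=f\, d\alpha$.

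Next, decompose $\alpha=\sum_{d}\alpha_d$ into quasihomogeneous components with $L_{E_w}\alpha_d=d\alpha_d$. The key observation is that for $n\ge 2$ every nonzero component has weight $d\ge w_{\min}:=\min_i w_i>0$: indeed, $\Omega^n(\log \Sigma)$ is generated over $\mathcal{O}_{n+1}$ by $n$-fold wedges of the forms $dy_i$ and of the weight-zero generator $dx/x$, and any such wedge with $n\ge 2$ slots must contain at least one $dy_i$ and is therefore of strictly positive weight. For each such component the identity above yields
\[
df\wedge \alpha_d=\frac{1}{d}\, df\wedge d(E_w\lrcorner \alpha_d)+\frac{f}{d}\, d\alpha_d\in df\wedge d\Omega^{n-1}(\log \Sigma)+f\Omega^{n+1}(\log \Sigma),
\]
which is of the required shape.

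The main obstacle is then to sum these formulas over $d$ and obtain \emph{convergent} germs $b=\sum_d (1/d)\, E_w\lrcorner \alpha_d\in \Omega^{n-1}(\log \Sigma)$ and $\eta=\sum_d (1/d)\, d\alpha_d\in \Omega^{n+1}(\log \Sigma)$ realising the decomposition $df\wedge \alpha=df\wedge db+f\eta$. This works because the uniform bound $1/d\le 1/w_{\min}$ implies that both series are dominated by the already convergent quasihomogeneous expansion of $\alpha$, so they converge in the analytic (not merely formal) logarithmic modules. This yields the reverse inclusion and completes the proof.
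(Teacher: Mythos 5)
Your proposal is correct, but it proves the proposition by a genuinely different route than the paper. You reduce everything to the submodule identity $df\wedge d\Omega^{n-1}(\log \Sigma)+f\Omega^{n+1}(\log \Sigma)=df\wedge \Omega^n(\log \Sigma)$ and prove the nontrivial inclusion constructively, via the Euler field: Cartan's formula plus $E_w\lrcorner(df\wedge d\alpha)=0$ gives $d\cdot df\wedge\alpha_d=df\wedge d(E_w\lrcorner\alpha_d)+f\,d\alpha_d$ on each quasihomogeneous component, and the uniform positive lower bound on the weights of the generators of $\Omega^n(\log\Sigma)$ (valid precisely because $n\geq 2$, so the weight-zero generator $dx/x$ never stands alone) lets you divide by the weight and resum convergently. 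The paper instead forms the natural surjection $\pi$ onto the logarithmic Tjurina module $\Omega_{X_0}(\log\Sigma)$, identifies its kernel with $df\wedge\Omega^n(\log\Sigma)/(df\wedge d\Omega^{n-1}(\log\Sigma)+f\Omega^{n+1}(\log\Sigma))$, and kills it by a dimension count: the kernel has dimension $q_\Sigma(f)=\mu_\Sigma(f)-\tau_\Sigma(f)$ (this uses the freeness Theorem~\ref{thm3} to know $\dim H''_f(\log\Sigma)/fH''_f(\log\Sigma)=\mu_\Sigma(f)$), and $q_\Sigma(f)=0$ in the quasihomogeneous case since $f$ lies in its relative Jacobian ideal by~(\ref{eqqh}). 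The trade-off: the paper's argument is shorter given the machinery already in place, and its exact sequence~(\ref{qhses}) is reused in Remark~\ref{pl} to characterise quasihomogeneity and to measure the defect in the general case; your argument is self-contained (independent of Theorem~\ref{thm3} and of any Milnor/Tjurina number comparison), exhibits explicit primitives, and makes transparent exactly where the hypothesis $n\geq 2$ enters --- which explains the extra $\mathbb{C}$ summand appearing in the planar analogue, Proposition~\ref{pl2}. One cosmetic point: your lower bound should be stated as the minimal weight of a generator of $\Omega^n(\log\Sigma)$ rather than $\min_i w_i$ per se, though the two agree up to the positivity that is all you actually use.
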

\begin{proof}
Notice first that since $df\wedge d\Omega^{n-1}(\log \Sigma)\subset df\wedge \Omega^{n}(\log \Sigma)$, there is a natural projection:
\[\frac{H''_f(\log \Sigma)}{fH''_{f}(\log \Sigma)}\stackrel{\pi}{\rightarrow} \Omega_{X_0}(\log \Sigma):=\frac{\Omega_f(\log \Sigma)}{f\Omega_f(\log \Sigma)}\cong \frac{\Omega^{n+1}(\log \Sigma)}{df\wedge \Omega^n(\log \Sigma)+f\Omega^{n+1}(\log \Sigma)},\]
where the module on the right can be interpreted as the logarithmic Tjurina module of the isolated hypersurface singularity $X_0=\{f=0\}$. Indeed, if $\Sigma=\{x=0\}$ then multiplication by $x$ induces an isomorphism:
\[\Omega_{X_0}(\log \Sigma)\stackrel{\cdot x}{\cong} \Omega_{X_0}(\Sigma):=\frac{\Omega^{n+1}}{df\wedge \Omega^n(\Sigma)+f\Omega^{n+1}}\cong \frac{\mathcal{O}_{n+1}}{<x\partial_x f,\partial_{y_1}f,\cdots, \partial_{y_n}f>+f\mathcal{O}_{n+1}},\]
where the module on the right is the module of infinitesimal deformations of the singularity $X_0$ with respect to diffeomorphisms preserving $\Sigma$ (the relative Tjurina algebra). 
It is in particular a finite dimensional $\mathbb{C}$-vector space, whose dimension we denote by $\tau_{\Sigma}(f)$ (relative Tjurina number):
\[\tau_{\Sigma}(f):=\dim_{\mathbb{C}}\Omega_{X_0}(\Sigma)=\dim_{\mathbb{C}}\Omega_{X_0}(\log \Sigma).\] 
Thus we have obtained a short exact sequence of finite dimensional $\mathbb{C}$-vector spaces:
\begin{equation}
\label{qhses}
0\rightarrow \frac{df\wedge \Omega^n(\log \Sigma)}{df\wedge d\Omega^{n-1}(\log \Sigma)+f\Omega^{n+1}(\log \Sigma)}\rightarrow \frac{H''_f(\log \Sigma)}{fH''_f(\log \Sigma)}\stackrel{\pi}{\rightarrow}\Omega_{X_0}(\log \Sigma)\rightarrow 0,
\end{equation}
where the term on the left is of dimension $q_{\Sigma}(f):=\mu_{\Sigma}(f)-\tau_{\Sigma}(f)$. The proof now is concluded by the fact that for a quasihomogeneous boundary singularity $(f,\Sigma)$,  the function $f$ belongs to its relative Jacobian ideal (by equations (\ref{eqqh}) above), and in particular $q_{\Sigma}(f)=0\Leftrightarrow \mu_{\Sigma}(f)=\tau_{\Sigma}(f)$. 
\end{proof}  

\begin{remark}
\label{pl}
\normalfont{It is easy to show in fact (using a relative version of the Poincar\'e-Dulac theorem with respect to $\Sigma=\{x=0\}$) that the converse statement is also true, i.e. the following relative version of the well known K. Saito's theorem \cite{Sa} holds: a boundary singularity $(f,\Sigma)$ is quasihomogeneous if and only if $q_{\Sigma}(f)=0$.  This is equivalent in turn to the acyclicity of the complex $\widetilde{\Omega}_{X_0}^{\bullet}(\log \Sigma)$ which is the torsion-free part of the complex $\Omega^{\bullet}_{X_0}(\log \Sigma):=\Omega^{\bullet}_f(\log \Sigma)/f\Omega^{\bullet}_f(\log \Sigma)$ of differential forms on $X_0$ with logarithmic singularities along $\Sigma$ (notice that the last term of the latter complex is nothing but the logarithmic Tjurina module introduced above). Indeed, if $X_0$ is reduced, and if we denote by $X_0^*=X_0\setminus \{0\}$ its smooth part, then multiplication by $df\wedge$ identifies the last cohomology $H^n(\widetilde{\Omega}^{\bullet}_{X_0}(\log \Sigma))$ with the left term of the short exact sequence (\ref{qhses}) above:
\[df\wedge H^n(\widetilde{\Omega}^{\bullet}_{X_0}(\log \Sigma)):=df\wedge (\frac{\Omega^n(\log \Sigma)}{d\Omega^{n-1}(\log \Sigma)+\Omega^{n}(X_0^*,\log \Sigma)})\cong \]
\[\cong \frac{df\wedge \Omega^{n}(\log \Sigma)}{df\wedge d\Omega^{n-1}(\log \Sigma)+f\Omega^{n+1}(\log \Sigma)},\]
the last isomorphism been obtained by the obvious Poincar\'e residue short exact sequence of subcomplexes:
\begin{equation}
\label{prs}
0\rightarrow \Omega^{\bullet}(X_0^*)\rightarrow \Omega^{\bullet}(X_0^*,\log \Sigma)\stackrel{R}{\rightarrow}i_*\Omega^{\bullet-1}_{\Sigma}(X_0^*)\rightarrow 0.
\end{equation}
In fact, more is true: taking the quotient of this short exact sequence with the ordinary Poincar\'e residue short exact sequence, we obtain a short exact sequence of forms on $X_0$:
\begin{equation}
\label{sestf}
0\rightarrow \widetilde{\Omega}^{\bullet}_{X_0}\rightarrow \widetilde{\Omega}^{\bullet}_{X_0}(\log \Sigma)\stackrel{R}{\rightarrow}i_*\widetilde{\Omega}^{\bullet-1}_{X_0\cap \Sigma}\rightarrow 0,
\end{equation}
which induces in turn a long exact sequence in cohomology, whose only non-zero terms are (according to G. M Greuel for example, c.f. \cite{Gr}):
\begin{equation}
\label{sestfcoh}
0\rightarrow H^{n}(\widetilde{\Omega}^{\bullet}_{X_0}) \rightarrow H^{n}(\widetilde{\Omega}^{\bullet}_{X_0}(\log  \Sigma))\rightarrow H^{n-1}(\widetilde{\Omega}^{\bullet}_{X_0\cap \Sigma})\rightarrow 0.
\end{equation}
Again by Greuel \cite{Gr}, the terms on the right and on the left are of $\mathbb{C}$-dimensions $q(f|_{\Sigma})=\mu(f|_{\Sigma})-\tau(f|_{\Sigma}))$ and $q(f)=\mu(f)-\tau(f)$ respectively, and thus we obtain:
\[\dim_{\mathbb{C}}H^{n}(\widetilde{\Omega}^{\bullet}_{X_0}(\log  \Sigma))=q_{\Sigma}(f)=q(f)+q(f|_{\Sigma}).\]
This implies that indeed a boundary singularity $(f,\Sigma)$ is quasihomogeneous if and only if 
\[q_{\Sigma}(f)=0\Longleftrightarrow q(f)=q(f|_{\Sigma})=0.\] }
\end{remark}

Coming back to the problem of obtaining a basis for the logarithmic Brieskorn module, we conclude that for quasihomogeneous boundary singularities $(f,\Sigma)$, such a basis can be constructed as follows: take a basis $\{e_i(x,y)\}_{i=1}^{\mu_{\Sigma}(f)}$ of the local  algebra $Q_{f,\Sigma}=\mathcal{O}_{n+1}/<x\partial_xf,\partial_{y_i}(f)>$ and multiply by the standard Nambu form $x^{-1}dx\wedge dy^n$ to obtain a basis $\{\omega_i=e_i(x,y)x^{-1}dx\wedge dy^n\}_{i=1}^{\mu_{\Sigma}(f)}$ of the module $\Omega^{n+1}_f(\log \Sigma)$. Then by the isomorphism $\pi$ of Proposition \ref{pl} this defines also a basis of $H''_{f}(\log \Sigma)/fH''_f{\log \Sigma}$, which lifts, by Nakayama's lemma, to a $\mathbb{C}\{f\}$-basis of $H''_{f}(\log \Sigma)$.

\begin{example}
\label{sn}
\normalfont{All simple boundary singularities in Arnol'd's list (c.f. \cite{A2}) are quasihomogeneous and thus, by Theorem \ref{hm}, any Nambu form is equivalent to the normal form:
\[\omega=\frac{\sum_{i=1}^{\mu_{\Sigma}(f)}c_i(f)e_i(x,y)}{x}dx\wedge dy^n,\]
where the $e_i$'s form a basis of the local algebra $Q_{f,\Sigma}$. Below we give as an example the normal forms for the $A_k$, $B_k$, $C_k$ series and $F_4$ (the $D_k$ and $E_{6,7,8}$ singularities can be computed in the same way).
\[A_k: \quad \omega=\frac{\sum_{i=0}^{k-1}c_i(f)y^i}{x}dx\wedge dy^n,\quad f=x+y_1^{k+1}+Q,\quad k\geq 1\]
\[B_k: \quad \omega=\frac{\sum_{i=0}^{k-1}c_i(f)x^i}{x}dx\wedge dy^n,\quad f=x^k+y_1^2+Q,\quad k\geq 2\]
\[C_k: \quad \omega=\frac{\sum_{i=0}^{k-1}c_i(f)y^i}{x}dx\wedge dy^n,\quad f=xy_1+y_1^k+Q,\quad k\geq 2\]
\[F_4: \quad \omega=\frac{c_0(f)+c_1(f)x+c_2(f)y+c_3(f)xy}{x}dx\wedge dy^n,\quad f=x^2+y_1^3+Q,\]
where $Q=\sum_{i=2}^{n+1}y_i^2$. All the functions $c_i(t)$ appearing above are functional moduli, $c_0(0)=1$.}
\end{example}


\subsection{Case $n=1$}

The results of the previous section cannot be transferred to the $2$-dimensional case without modifications, the main reason being that by definition $\Omega^0(\log \Sigma):=\Omega^0$. Despite this fact, the corresponding finiteness and freeness theorem for the logarithmic Brieskorn module:
\[H''_f(\log \Sigma):=\frac{\Omega^2(\log \Sigma)}{df\wedge d\Omega^0},\] 
still holds true. Before we prove this, let us see how it can be identified with the infinitesimal deformation module $\mathcal{D}_{f,\Sigma}(\omega)$ of a Poisson form $\omega \in \Omega^2_*(\log \Sigma)$. For this we will need first the following:

\begin{lem}
\label{theta}
Any vector field $V\in \Theta(f,\Sigma)$ on the plane $\mathbb{C}^2$ vanishes identically along the curve $\Sigma$. 
\end{lem}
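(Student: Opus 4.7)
My plan is to work in local coordinates $(x,y)$ with $\Sigma=\{x=0\}$ and write an arbitrary $V\in \Theta(f,\Sigma)$ as $V=a(x,y)\partial_x+b(x,y)\partial_y$. The tangency condition $L_V(x)\in \langle x\rangle$ immediately gives $a\in \langle x\rangle$, i.e.\ $a=x\tilde a$ for some holomorphic $\tilde a$, so the $\partial_x$--component already vanishes on $\Sigma$. The whole task is therefore to show that the second coefficient also lies in $\langle x\rangle$, i.e.\ that $b(0,y)\equiv 0$.

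To get this, I would use the remaining condition $L_V(f)=af_x+bf_y=0$, restrict it to $\Sigma=\{x=0\}$, and use that $a(0,y)=0$. This reduces the identity to
\[
b(0,y)\,f_y(0,y)\equiv 0.
\]
Since $f_y(0,y)=\tfrac{d}{dy}(f|_\Sigma)(y)$ and $(f,\Sigma)$ is by assumption an isolated boundary singularity, the restriction $f|_\Sigma$ has either no critical point or an isolated critical point at the origin; in either case $f_y(0,y)$ is a non-zero element of $\mathcal O_\Sigma=\mathbb C\{y\}$. As $\mathbb C\{y\}$ is an integral domain, the displayed identity forces $b(0,y)\equiv 0$, hence $b\in \langle x\rangle$. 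Writing $b=x\tilde b$, we conclude
\[
V=x\bigl(\tilde a(x,y)\partial_x+\tilde b(x,y)\partial_y\bigr),
\]
which manifestly vanishes identically along $\Sigma$.

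The only real obstacle is the appeal to the isolation hypothesis to guarantee $f_y(0,y)\not\equiv 0$; this is precisely where the argument fails in dimension $n\geq 2$, because then the kernel condition $b\cdot df|_\Sigma=0$ on $\Sigma$ leaves room for a non-trivial $(n{-}1)$-dimensional family of tangent directions inside $\Sigma$ annihilating $f|_\Sigma$, so the vector field need not vanish there. In the planar case there is no such room, and the lemma follows at once.
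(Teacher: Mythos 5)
Your proof is correct, and it takes a slightly different (and more elementary) route than the paper's. The paper first invokes the structure of $\Theta(f)$: since $L_V(f)=0$ and $f$ has an isolated singularity, $V$ must be a function multiple $v(x,y)\,(\partial_yf\,\partial_x-\partial_xf\,\partial_y)$ of the Hamiltonian vector field of $f$ (this is the de Rham division lemma in disguise), and only then uses the tangency condition $V(x)=v\,\partial_yf\in\langle x\rangle$ together with $\partial_yf|_{x=0}\not\equiv 0$ to conclude $v(0,y)\equiv 0$. You instead use the tangency condition first to kill the $\partial_x$--coefficient on $\Sigma$, and then read off $b(0,y)\,f_y(0,y)\equiv 0$ directly from the restriction of $L_V(f)=0$ to $\Sigma$; the integral-domain argument in $\mathbb{C}\{y\}$ finishes it. Both arguments ultimately rest on the same key fact, namely $f_y(0,y)=\tfrac{d}{dy}(f|_\Sigma)\not\equiv 0$ because $f|_\Sigma$ has at most an isolated critical point, but your version avoids the division lemma altogether and only uses the isolatedness of the singularity of the restriction $f|_\Sigma$, not of $f$ itself. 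Your closing remark about why the argument breaks down for $n\geq 2$ is also accurate and consistent with the paper's treatment, where Lemma \ref{logdm} must be argued differently precisely because vector fields in $\Theta(f,\Sigma)$ need not vanish along $\Sigma$ in higher dimensions.
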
 
\begin{proof}
Fix coordinates $(x,y)$ such that $\Sigma=\{x=0\}$. Since $V\in \Theta(f)$, it follows that $V$ is a multiple of the Hamiltonian vector field of $f$, i.e. there exists a function $v$ such that:
\[V=v(x,y)(\partial_yf\partial_x-\partial_xf\partial_y).\]
Since $V\in \Theta(\Sigma)$ it follows that $V(x)=v(x,y)\partial_yf\subset <x>$, and since $\partial_yf|_{x=0}\not\equiv 0$ ($f$ has at most isolated singularity on $\Sigma$), we obtain that $v(0,y)\equiv 0$, which proves the claim.
\end{proof}

Using this lemma we can now prove the $2$-dimensional analog of Lemma \ref{logdm}. 

\begin{lem}
\label{logdm2}
The infinitesimal deformation module $\mathcal{D}_{f,\Sigma}(\omega)$ of a Poisson form $\omega \in \Omega_*^{2}(\log \Sigma)$ with respect to diffeomorphisms preserving the boundary singularity $(f,\Sigma)$ is isomorphic to the logarithmic Brieskorn module:
\[H''_{f}(\log \Sigma):=\frac{\Omega^2(\log \Sigma)}{df\wedge d\Omega^0}.\]
\end{lem}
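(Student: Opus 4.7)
The plan is to establish the isomorphism $\mathcal{D}_{f,\Sigma}(\omega) \cong H''_f(\log \Sigma)$ by proving the equality of submodules $T(\omega) = df\wedge d\Omega^0$ inside $\Omega^2(\log\Sigma)$, following the strategy of Lemma \ref{logdm} but substituting Lemma \ref{theta} for the logarithmic de Rham division lemma, which is what fails to go through naively when $n=1$.

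First I would prove $T(\omega) \subseteq df\wedge d\Omega^0$. Let $V\in \Theta(f,\Sigma)$. The key point is that $V\lrcorner\omega$ is actually \emph{holomorphic}, not merely logarithmic: by Lemma \ref{theta}, $V$ vanishes identically along $\Sigma=\{x=0\}$, so we may write $V=xV'$ for a holomorphic vector field $V'$, and since $\omega=(xg)^{-1}dx\wedge dy$ with $g(0)\ne 0$, the factor of $x$ cancels the pole of $\omega$, giving $V\lrcorner\omega=V'\lrcorner(g^{-1}dx\wedge dy)\in\Omega^1$. The condition $L_V(f)=0$ yields $df\wedge(V\lrcorner\omega)=0$, and now the ordinary (non-logarithmic) de Rham division lemma in $\Omega^1$ produces $b\in \Omega^0$ with $V\lrcorner\omega=b\,df$. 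Since $d\omega=0$, Cartan's formula gives $L_V\omega=d(V\lrcorner\omega)=db\wedge df \in df\wedge d\Omega^0$.

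Second I would prove the reverse inclusion $df\wedge d\Omega^0\subseteq T(\omega)$. Given $b\in\Omega^0$, define $V$ uniquely by the requirement $V\lrcorner\omega=-b\,df$. In explicit coordinates this gives
\[
V=xgb\bigl(\partial_xf\,\partial_y-\partial_yf\,\partial_x\bigr),
\]
which manifestly vanishes along $\Sigma$ (so $V\in\Theta(\Sigma)$) and satisfies $V(f)=0$ (so $V\in\Theta(f)$); hence $V\in\Theta(f,\Sigma)$. Then $L_V\omega=d(V\lrcorner\omega)=-d(b\,df)=df\wedge db$, as required.

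The main obstacle and conceptual point is the first step: in the higher-dimensional argument one relies on the logarithmic de Rham division lemma to obtain $V\lrcorner\omega=df\wedge a$ with $a$ logarithmic of degree $n-1$, but for $n=1$ that lemma would demand division in $\Omega^{-1}_\Sigma$, which is meaningless. Lemma \ref{theta} circumvents this precisely by forcing $V\lrcorner\omega$ into the holomorphic subspace $\Omega^1\subset\Omega^1(\log\Sigma)$, where the ordinary de Rham division lemma is available. Once this is in place, the rest of the argument is essentially the planar specialisation of the proof of Lemma \ref{logdm}, and combining the two inclusions yields the claimed isomorphism.
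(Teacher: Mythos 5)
Your proposal is correct and follows essentially the same route as the paper's own proof: Lemma \ref{theta} forces $V\lrcorner\omega$ into $\Omega^1$, the ordinary de Rham division lemma and Cartan's formula give one inclusion, and solving $V\lrcorner\omega=-b\,df$ for $V\in\Theta(f,\Sigma)$ gives the other. The only (immaterial) difference is that where the paper invokes the perfect pairing between $\Theta^T(\Sigma)$ and $\Omega^1$ induced by $\omega$, you write the resulting vector field out explicitly in coordinates.
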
 
\begin{proof}
Again, fix coordinates $(x,y)$ such that $\Sigma=\{x=0\}$. It suffices to show that the tangent space $T(\omega)=\{L_V\omega/V\in \Theta(f,\Sigma)\}$ can be identified with the submodule $df\wedge d\Omega^0\subset \Omega^2(\log \Sigma)$.
Let $V\in \Theta(f,\Sigma)$. Then 
\[0=L_V(f)\omega=df\wedge (V\lrcorner \omega).\]
Since $V$ vanishes on $\Sigma$, the 1-form $V\lrcorner \omega$ is in fact holomorphic, and by the ordinary de Rham division lemma, there exists a function $h\in \Omega^0$ such that:
\[V\lrcorner \omega=hdf.\]
It follows then from Cartan's formula that:
\[L_V\omega=df\wedge d(-h).\]
Conversely, let $h\in \Omega^0$ and consider the holomorphic 1-form $hdf\in \Omega^{1}$. Since $\omega$ defines a perfect pairing between $\Theta^T(\Sigma)$ and $\Omega^{1}$ (where we denote $\Theta^T(\Sigma)$ the submodule of vector fields vanishing on $\Sigma$), there exists a vector field $V\in \Theta^T(\Sigma)$ such that:
\[V\lrcorner \omega=hdf.\]
In fact, $V\in \Theta(f)$ as well, since multiplication by $df\wedge$ in the equation above gives:
\[df\wedge (V\lrcorner \omega)=L_V(f)\omega=0\Longrightarrow L_V(f)=0.\]
From this it follows that $L_V\omega=df\wedge d(-h)$, and the lemma is proved.   
\end{proof}

We can now state the main theorem which is the 2-dimensional analog of the logarithmic Brieskorn-Sebastiani Theorem \ref{thm3}.

\begin{thm}
\label{ff2}
The logarithmic Brieskorn module $H''_f(\log \Sigma)$ is a free $\mathbb{C}\{t\}$-module of rank $\mu_{\Sigma}(f)+1=\mu(f)+(\mu(f|_{\Sigma})+1)$:
\[H''_f(\log \Sigma)\cong \mathbb{C}\{t\}^{\mu_{\Sigma}(f)+1}.\]
Moreover, it is a natural extension at the origin of the cohomological Milnor bundle $\cup_{t\in S*} H^1(X_t\setminus X_t\cap \Sigma;\mathbb{C})$ (over a sufficiently small punctured neighborhood of the origin $0\in \mathbb{C}$).
\end{thm}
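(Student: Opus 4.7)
The plan is to imitate the proof of Theorem \ref{thm3}, replacing the Poincar\'e residue short exact sequence of Brieskorn modules by a modified version adapted to the planar setting. The key structural difference comes from the convention $\Omega^0(\log\Sigma):=\Omega^0$, which forces the submodule $df\wedge d\Omega^0 \subset \Omega^2$ to lie in the kernel of the Poincar\'e residue $R:\Omega^2(\log\Sigma)\to i_*\Omega^1_\Sigma$. Starting from the Poincar\'e residue short exact sequence
\[0\to \Omega^2 \to \Omega^2(\log\Sigma) \stackrel{R}{\to} i_*\Omega^1_\Sigma \to 0,\]
I would mod out the left and middle terms by $df\wedge d\Omega^0$, leaving the right term untouched since $R$ annihilates the submodule. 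This should produce a short exact sequence of $\mathbb{C}\{t\}$-modules
\[0\to H''_f \to H''_f(\log \Sigma) \stackrel{R}{\to} \Omega^1_\Sigma \to 0,\]
where $t=f$ acts as $f|_\Sigma$ on the right-hand term. Note that in contrast with the case $n\geq 2$, the right term is the \emph{entire} module of holomorphic 1-forms on the curve, not a Brieskorn-type quotient, which is precisely the source of the extra $+1$ in the rank.

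Freeness then follows from two inputs. First, the classical Brieskorn-Sebastiani theorem identifies $H''_f$ as a free $\mathbb{C}\{t\}$-module of rank $\mu(f)$. Second, since $\Sigma$ is a smooth curve and $f|_\Sigma$ has an isolated critical point of multiplicity $\mu(f|_\Sigma)+1$, Weierstrass preparation shows that $\mathcal{O}_\Sigma$, and consequently $\Omega^1_\Sigma\cong \mathcal{O}_\Sigma$, is a free $\mathbb{C}\{t\}$-module of rank $\mu(f|_\Sigma)+1$. Since $\mathbb{C}\{t\}$ is a principal ideal domain and the middle term of the sequence is a finitely generated torsion-free module sandwiched between two free modules, it is itself free, of rank $\mu(f)+\mu(f|_\Sigma)+1=\mu_\Sigma(f)+1$. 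Torsion-freeness of the middle is immediate by a diagram chase: if $f^k\omega=0$ in $H''_f(\log\Sigma)$, then $f^k R(\omega)=0$ in the torsion-free $\Omega^1_\Sigma$, so $R(\omega)=0$ and $\omega$ lifts to $H''_f$, which is torsion-free.

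For the cohomological part, I would sheafify the short exact sequence over a small punctured disc $S^*$ and pass to Gelfand-Leray residues $\omega\mapsto \omega/df$, which convert logarithmic $2$-forms into $1$-forms on the Riemann surface $X_t$ with logarithmic poles at the finite set $X_t\cap \Sigma$. Since $X_t$ and $X_t\cap\Sigma$ are Stein, de Rham and topological cohomology agree, and the resulting short exact sequence of cohomology bundles is exactly the dual of the Thom-Gysin sequence (\ref{GT}) specialized to $n=1$, where $H_0(X_t\cap \Sigma;\mathbb{C})$ has rank $\mu(f|_\Sigma)+1$ (the number of points in the 0-dimensional Milnor fiber of $f|_\Sigma$, not the reduced rank). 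The main technical point requiring care is checking that $R$ descends cleanly to the quotients and commutes with both $d$ and $df\wedge$ as required, and that the Gelfand-Leray map intertwines the Poincar\'e residue with the dual of the Leray tube operator; Lemma \ref{theta}, combined with the standard 2-dimensional de Rham division arguments already exploited in Lemma \ref{logdm2}, should make these verifications routine.
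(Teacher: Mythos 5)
Your proposal is correct and follows essentially the same route as the paper: mod out the Poincar\'e residue sequence by $df\wedge d\Omega^0$ (which lies in $\ker R=\Omega^2$) to get $0\to H''_f\to H''_f(\log\Sigma)\to i_*\Omega^1_\Sigma\to 0$, identify $\Omega^1_\Sigma$ via Weierstrass preparation as free of rank $\mu(f|_\Sigma)+1$ over $\mathbb{C}\{t\}$, conclude freeness of the middle term, and pass to Gelfand--Leray residues and the dual Thom--Gysin sequence for the geometric statement. Your explicit torsion-freeness diagram chase is a slightly more detailed justification than the paper's (which implicitly uses that an extension by a free, hence projective, quotient splits), but the argument is the same in substance.
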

\begin{proof}
Consider the direct image through $f$ of the Poincar\'e residue short exact sequence 
\begin{equation}
\label{sesbm2}
0\rightarrow \frac{\Omega^2}{df\wedge d\Omega^0}\rightarrow \frac{\Omega^2(\log \Sigma)}{df\wedge d\Omega^0}\stackrel{R}{\rightarrow} i_*\Omega^1_{\Sigma}\rightarrow 0.
\end{equation} 
By the Weierstrass preparation theorem the module $\Omega^1_{\Sigma}$ becomes, through the mapping $f\circ i:=f|_{\Sigma}:\Sigma \rightarrow \mathbb{C}$, a free $\mathbb{C}\{t\}$-module of rank equal to the degree of $f|_{\Sigma}=t$:
\[H''_{f|_{\Sigma}}:=(f|_{\Sigma})_*\Omega^1_{\Sigma}\cong f_*i_*\Omega^1_{\Sigma}\cong \mathbb{C}\{t\}^{\text{deg}(f|_{\Sigma})}=\mathbb{C}\{t\}^{\mu(f|_{\Sigma})+1}.\]
Thus, we obtain again a short exact sequence of $\mathbb{C}\{t\}$-modules:
\[0\rightarrow H''_f\rightarrow H''_f(\log \Sigma)\stackrel{R}{\rightarrow}H''_{f|_{\Sigma}}\rightarrow 0.\]
It follows from this that the module in the middle is indeed a free $\mathbb{C}\{t\}$-module of rank $\mu_{\Sigma}(f)+1=\mu(f)+(\mu(f|_{\Sigma})+1)$. The rest of the proof is exactly the same as in the higher dimensional case with no modifications (notice that the cohomology $H^1(X_t\setminus X_t\cap \Sigma;\mathbb{C})$ is indeed a vector space of dimension $\mu_{\Sigma}(f)+1$, due to the short exact sequence:
\[0\rightarrow H^1(X_t;\mathbb{C})\rightarrow H^1(X_t\setminus X_t\cap \Sigma;\mathbb{C})\rightarrow H^0(X_t\cap \Sigma;\mathbb{C})\rightarrow 0,\]
where the term on the left is of dimension $\mu(f)$ and the term on the right of dimension $\mu(f|_{\Sigma})+1$.)
\end{proof}

\begin{thm}
\label{hm2}
Any two Poisson forms $\omega, \omega' \in \Omega_*^2(\log \Sigma)$ are equivalent if and only if they define the same class in the logarithmic Brieskorn module $H''_f(\log \Sigma)$. In particular, any Poisson form $\omega'$ is equivalent to its representative in the logarithmic Brieskorn module:
\[\omega=\sum_{i=1}^{\mu_{\Sigma}(f)+1}c_i(f)\omega_i.\]
The functions $c_i(t)$ are functional invariants (for the fixed basis $\{\omega_i\}_{i=1}^{\mu_{\Sigma}(f)+1}$).
\end{thm}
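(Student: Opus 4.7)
The plan is to imitate the proof of Theorem \ref{hm}, adapting it to the planar setting via the identifications of Lemma \ref{theta} and Lemma \ref{logdm2}. For the necessity direction, if $\Phi\in\mathcal{R}(f,\Sigma)$ is a diffeomorphism tangent to the identity with $\Phi^*\omega'=\omega$, then $\Phi$ acts as the identity on the topological cohomology $H^1(X_t\setminus X_t\cap\Sigma;\mathbb{C})$ of each nearby complement of Milnor fibers. Combined with the identification of $H''_f(\log\Sigma)$ with the sheaf of sections of the cohomological Milnor bundle via the Gelfand--Leray map $[\omega]\mapsto[\omega/df|_{X_t}]$ (Theorem \ref{ff2}), this forces $[\omega]=[\omega']$ in $H''_f(\log\Sigma)$.

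For the sufficiency direction, I would assume $\omega'-\omega=df\wedge dh$ for some $h\in\Omega^0$, and interpolate by the linear path $\omega_s=\omega+s\,df\wedge dh$, $s\in[0,1]$. Each $\omega_s$ still lies in $\Omega^2_*(\log\Sigma)$: in coordinates $(x,y)$ with $\Sigma=\{x=0\}$ and $\omega=(xg(x,y))^{-1}dx\wedge dy$, the coefficient $g_s$ of $\omega_s$ satisfies $g_s(0)=g(0)=1$ for every $s$. Moser's homotopy method then reduces, via Cartan's formula and $d\omega_s=0$, the homological equation $L_{V_s}\omega_s=-df\wedge dh$ to the algebraic equation
\begin{equation*}
V_s\lrcorner\omega_s=h\,df.
\end{equation*}
The perfect pairing between $\Theta^T(\Sigma)$ and $\Omega^1$ given by contraction with $\omega_s$, as established in the proof of Lemma \ref{logdm2}, provides a unique vector field $V_s\in\Theta^T(\Sigma)$ solving this equation. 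The relation $df\wedge(V_s\lrcorner\omega_s)=h\,df\wedge df=0$ forces $L_{V_s}(f)\omega_s=0$ and hence $L_{V_s}(f)=0$, so $V_s\in\Theta(f,\Sigma)$. Since $V_s$ vanishes along $\Sigma$, and in particular at the origin, integrating its flow from $s=0$ to $s=1$ yields a diffeomorphism $\Phi\in\mathcal{R}(f,\Sigma)$ tangent to the identity, defined on a common small neighborhood of $0$, with $\Phi^*\omega'=\omega$.

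The second assertion of the theorem, that every Poisson form $\omega'$ is equivalent to a canonical representative $\omega=\sum_{i=1}^{\mu_{\Sigma}(f)+1}c_i(f)\omega_i$ in the fixed basis $\{\omega_i\}$, then follows immediately: by Theorem \ref{ff2}, expanding $[\omega']\in H''_f(\log\Sigma)\cong\mathbb{C}\{t\}^{\mu_{\Sigma}(f)+1}$ in this basis yields uniquely determined coefficients $c_i(t)\in\mathbb{C}\{t\}$, and applying the equivalence criterion just proved to $\sum c_i(f)\omega_i$ finishes the argument. Uniqueness of the tuple $(c_i)$ in the chosen basis is what makes these genuine functional invariants.

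The main technical point is to make the Moser step go through uniformly in $s$: one needs the family of nondegenerate pairings $\Theta^T(\Sigma)\to\Omega^1$ induced by $\omega_s$ to be invertible holomorphically in $s$, so that $V_s$ is a jointly holomorphic time-dependent vector field whose flow is defined on a common neighborhood of the origin for all $s\in[0,1]$. In the coordinates above this amounts to inverting the unit $g_s$, which is harmless since $g_s(0)=1$ throughout the homotopy; once this is verified, the remaining steps reproduce verbatim those of the higher-dimensional case of Theorem \ref{hm}.
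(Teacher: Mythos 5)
Your proposal is correct and follows essentially the same route as the paper, which simply declares the proof to be identical to that of Theorem \ref{hm}: necessity via the induced identity on the cohomology of $X_t\setminus X_t\cap\Sigma$, and sufficiency via Moser's homotopy method along $\omega_s=\omega+s\,df\wedge dh$ with $V_s\lrcorner\omega_s=h\,df$, using the pairing from Lemma \ref{logdm2}. Your added remarks on the uniformity of the pairing in $s$ and the vanishing of $V_s$ along $\Sigma$ are sensible elaborations of details the paper leaves implicit.
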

\begin{proof}
The proof is exactly the same as in Theorem \ref{hm} with no modification.
\end{proof}

Again, in order to obtain exact classification results for pairs $(\omega,f)$ on the plane we need to find explicitely a basis of the logarithmic Brieskorn module $H''_f(\log \Sigma)$, and in particular of the $\mathbb{C}$-vector space:
\[\frac{H''_f(\log \Sigma)}{fH''_f(\log \Sigma)}\cong \frac{\Omega^2(\log \Sigma)}{df\wedge d\Omega^0+f\Omega^2(\log \Sigma)}.\]
For quasihomogeneous boundary singularities $(f,\Sigma)$ this can be done in analogy with the higher dimensional case. Notice though that on the plane, quasihomogeneity of a boundary singularity is equivalent to the quasihomogeneity of the function $f$ only (i.e. forgetting $\Sigma$), due to the fact that the restriction $f|_{\Sigma}$ is always quasihomogeneous (as a function of 1-variable). In particular, as it is easy to verify, the following formula holds true:
\[q_{\Sigma}(f):=\mu_{\Sigma}(f)-\tau_{\Sigma}(f)=\mu(f)-\tau(f)=:q(f).\]
With this in mind we can now state the following analog of Proposition \ref{pl}:

\begin{prop}
\label{pl2}
Let $(f,\Sigma)$ be a quasihomogeneous singularity in $\mathbb{C}^2$. Then there is an isomorphism of  $\mathbb{C}$-vector spaces:
\[\frac{H''_f(\log \Sigma)}{fH''_f(\log \Sigma)}\cong \Omega_f(\log \Sigma)\oplus \mathbb{C}.\]
\end{prop}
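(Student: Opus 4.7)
The plan is to imitate the proof of Proposition \ref{p1} for the higher-dimensional case, tracking the single extra dimension that produces the $\oplus\,\mathbb{C}$ summand in the planar case.

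First, since $d\Omega^0\subset\Omega^1\subset\Omega^1(\log\Sigma)$, the inclusion $df\wedge d\Omega^0\subset df\wedge\Omega^1(\log\Sigma)$ yields, exactly as in (\ref{qhses}), a natural short exact sequence of $\mathbb{C}$-vector spaces
\[0\to K\to\frac{H''_f(\log\Sigma)}{fH''_f(\log\Sigma)}\xrightarrow{\pi}\Omega_{X_0}(\log\Sigma)\to 0,\]
where
\[K:=\frac{df\wedge\Omega^1(\log\Sigma)}{df\wedge d\Omega^0+f\Omega^2(\log\Sigma)}\]
and $\Omega_{X_0}(\log\Sigma)$ is the logarithmic Tjurina module of the plane curve $X_0=\{f=0\}$, as defined in the proof of Proposition \ref{p1}.

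Next I would invoke the quasihomogeneity hypothesis. On the plane this is equivalent to quasihomogeneity of $f$ alone, and the Euler identity $f=w_1xf_x+w_2yf_y$ yields the relative Jacobian containment $f\Omega^2(\log\Sigma)\subset df\wedge\Omega^1(\log\Sigma)$ exactly as in (\ref{eqqh}). Hence $\Omega_{X_0}(\log\Sigma)=\Omega_f(\log\Sigma)$, and the sequence above simplifies to
\[0\to K\to\frac{H''_f(\log\Sigma)}{fH''_f(\log\Sigma)}\to\Omega_f(\log\Sigma)\to 0.\]

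To finish, a dimension count suffices. By Theorem \ref{ff2}, $H''_f(\log\Sigma)$ is a free $\mathbb{C}\{t\}$-module of rank $\mu_\Sigma(f)+1$, so the middle term has $\mathbb{C}$-dimension $\mu_\Sigma(f)+1$. On the other hand, the isomorphisms $\Omega_f(\log\Sigma)\cong\Omega_f(\Sigma)\cong Q_{f,\Sigma}$ (obtained by division by $x$ and by the volume form, as recalled after the definition of $\mu_\Sigma(f)$ in Section 3) show that $\dim_\mathbb{C}\Omega_f(\log\Sigma)=\mu_\Sigma(f)$. Therefore $\dim_\mathbb{C}K=1$, and since every short exact sequence of $\mathbb{C}$-vector spaces splits, we obtain the stated isomorphism.

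The only genuinely informative step is the dimension count for $K$, and Theorem \ref{ff2} is essential there: the ``$+1$'' in the rank of the planar logarithmic Brieskorn module is exactly what accounts for the extra $\mathbb{C}$-factor and traces back geometrically to the fact that $\dim_\mathbb{C}H^0(X_t\cap\Sigma;\mathbb{C})=\mu(f|_\Sigma)+1$ rather than $\mu(f|_\Sigma)$. A more refined argument would identify a concrete generator of $K$ (a natural candidate is the class of $df\wedge\tfrac{dx}{x}$, whose Poincar\'e residue is proportional to $df|_\Sigma$ and which is the obvious logarithmic analog of the boundary generator), but that extra step is not needed for the vector-space statement as formulated.
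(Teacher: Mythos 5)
Your proof is correct and follows essentially the same route as the paper: the same short exact sequence induced by the inclusion $df\wedge d\Omega^0\subset df\wedge\Omega^1(\log\Sigma)$, the same use of quasihomogeneity to identify the Tjurina and Milnor modules, and the same dimension count via Theorem \ref{ff2} to show the kernel is one-dimensional. The only cosmetic difference is that the paper identifies your $K$ with the cohomology $H^1(\widetilde{\Omega}^{\bullet}_{X_0}(\log\Sigma))$ of the torsion-free logarithmic complex (of dimension $q_{\Sigma}(f)+1$), which you replace by a direct count; both are equivalent here.
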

\begin{proof}
Again by the fact that $df\wedge d\Omega^0\subset df\wedge \Omega^1(\log \Sigma)$ we obtain a short exact sequence of $\mathbb{C}$-vector spaces:
\[0\rightarrow H^1(\widetilde{\Omega}^{\bullet}_{X_0}(\log \Sigma))\stackrel{df\wedge}{\rightarrow} \frac{H''_f(\log \Sigma)}{fH''_f(\log \Sigma)}\rightarrow \Omega_{X_0}(\log \Sigma)\rightarrow 0,\]
where the term on the right is the logarithmic Tjurina module:
\[\Omega_{X_0}(\log \Sigma):=\frac{\Omega^2(\log \Sigma)}{df\wedge \Omega^1(\log \Sigma)+f\Omega^2(\log \Sigma)},\]
of dimension $\tau_{\Sigma}(f)$, and the term on the left is the first cohomology of the torsion-free complex $\widetilde{\Omega}^{\bullet}_{X_0}(\log \Sigma)$:
\[H^1(\widetilde{\Omega}^{\bullet}_{X_0}(\log \Sigma))=\frac{\Omega^1(\log \Sigma)}{d\Omega^0+\Omega^{1}(X_0^*,\log \Sigma)}\stackrel{df\wedge}{\cong}\frac{df\wedge \Omega^1(\log \Sigma)}{df\wedge d\Omega^0+f\Omega^2(\log \Sigma)},\]
with the last isomorphism been obtained again by the Poincar\'e residue short exact sequence (\ref{prs}) for $n=1$. Thus, it's $\mathbb{C}$-dimension is: 
\[\dim_{\mathbb{C}}H^1(\widetilde{\Omega}^{\bullet}_{X_0}(\log \Sigma))=q_{\Sigma}(f)+1=(\mu_{\Sigma}(f)+1)-\tau_{\Sigma}(f)=\mu(f)-\tau(f)+1.\]
The proof follows by the fact that for quasihomogeneous $f$, $q(f)=\mu(f)-\tau(f)=0$.
\end{proof}

\begin{remark}
\normalfont{As in the higher dimensional case we can compute the cohomology of the complex $\widetilde{\Omega}^{\bullet}_{X_0}(\log \Sigma)$ by taking the long exact sequence in cohomology (\ref{sestfcoh}), which for the planar case reads as:
\[0\rightarrow H^1(\widetilde{\Omega}^{\bullet}_{X_0})\rightarrow H^1(\widetilde{\Omega}^{\bullet}_{X_0}(\log \Sigma))\rightarrow H^0(\widetilde{\Omega}^{\bullet}_{X_0\cap \Sigma})\rightarrow 0.\]
The term on the left is of dimension $q(f)=\mu(f)-\tau(f)$ (again by Greuel \cite{Gr}) and the term on the right is 1-dimensional (obvious).  We conclude that indeed:
\[\dim_{\mathbb{C}}H^1(\widetilde{\Omega}^{\bullet}_{X_0}(\log \Sigma))=q_{\Sigma}(f)+1=q(f)+1.\]}
\end{remark}

It follows from the above that in order to obtain a basis for the logarithmic Brieskorn module $H''_{f}(\log \Sigma)$ for quasihomogeneous $(f,\Sigma)$, one has to choose a basis $\{e_i(x,y)\}_{i=1}^{\mu_{\Sigma}(f)}$ of the local algebra $Q_{f,\Sigma}=\mathcal{O}/<x\partial_xf,\partial_yf>$, as well as an element $e(x,y)$ which belongs to the ideal $<x\partial_xf,\partial_yf>$, but does not belong to the ideal $<f>$. Then the element $e(x,y)x^{-1}dx\wedge dy$ is a generator of $df\wedge H^1(\widetilde{\Omega}^{\bullet}_{X_0}(\log \Sigma))$, and along with the elements $\{\omega_i=e_i(x,y)x^{-1}dx\wedge dy\}_{i=1}^{\mu_{\Sigma}(f)}$, spans the vector space $H''_f(\log \Sigma)/fH''_f(\log \Sigma)$. By Nakayama's lemma, these lift to the desired $\mathbb{C}\{f\}$-basis of $H''_f(\log \Sigma)$.
  
\begin{example}
\label{s2}
\normalfont{The corresponding normal forms of Poisson structures for the non-singular $A_0$ and the simple boundary singularities $A_k$, $B_k$, $C_k$, $F_4$ are the following:
\[A_0:\quad \omega=\frac{\psi(f)}{x}dx\wedge dy, \quad f=y\]
\[A_k: \quad \omega=\frac{\sum_{i=0}^{k-1}c_i(f)y^i+\psi(f)y^k}{x}dx\wedge dy,\quad f=x+y^{k+1},\quad k\geq 1\]
\[B_k: \quad \omega=\frac{\sum_{i=0}^{k-1}c_i(f)x^i+\psi(f)y}{x}dx\wedge dy,\quad f=x^k+y^2,\quad k\geq 2\]
\[C_k: \quad \omega=\frac{\sum_{i=0}^{k-1}c_i(f)y^i+\psi(f)x}{x}dx\wedge dy,\quad f=xy+y^k,\quad k\geq 2\]
\[F_4: \quad \omega=\frac{c_0(f)+c_1(f)x+c_2(f)y+c_3(f)xy+\psi(f)y^2}{x}dx\wedge dy,\quad f=x^2+y^3.\]
All the functions $c_i(t)$, $\psi(t)$ appearing above are functional moduli, $c_0(0)=1$, and for the $A_0$ case $\psi(0)=1$ as well. For the $A_0$ case the function $\psi(f)$ admits a simple geometric meaning; it is the composed residue of $\omega$:
\[\psi(f)=R(\frac{\omega}{df})=\frac{R(\omega)}{df}.\]}
\end{example}

\section{Proofs of Theorems \ref{thm1}, \ref{thm2}}

\subsection{Proof of Theorem \ref{thm1}}

\noindent Case $A_0$: Consider normal form $A_0$ of Example \ref{s2} and let $\phi(t)$ be a function, $\phi(0)=0$, $\phi'(0)=1$, such that $\psi(t)=\phi'(t)$. Then, the change of coordinates $y\mapsto \phi(y)$ brings the pair $(\omega,f)$ to the desired normal form:
\[\omega=\frac{1}{x}dx\wedge dy, \quad f=\phi(y).\]  
\qed

\medskip

\noindent Case $A_1$: Start again with normal form $A_1$ of Example \ref{s2} and consider a diffeomorphism of the form:
\[(x,y)\stackrel{\Psi}{\longmapsto} (xv(f)),yv^{1/2}(f)),\]
for some function $v$, $v(0)=1$. The diffeomorphism $\Psi$ preserves $\Sigma=\{x=0\}$, sends $f$ to the new function $\Psi^*f=fv(f)=\zeta(f)$, $\zeta(0)=0$, $\zeta'(0)=1$, and sends the Poisson form  $\omega$ to the new form:
\[\Psi^*\omega=\frac{yv^{1/2}(f)\psi(f)+c_0(f)}{x}(2f(v^{1/2}(f))'+v^{1/2}(f))dx\wedge dy.\]
Choose now $v(t)$ as the solution of the initial value problem:
\[2t(v^{1/2}(t))'+v^{1/2}(t)=c_0^{-1}(t), \quad v^{1/2}(0)=1.\]
Setting $a(t)=v^{1/2}(t)\psi(t)c_0^{-1}(t)$ we obtain:
\[\omega=\frac{1+ya(x+y^2)}{x}dx\wedge dy, \quad f=\zeta(x+y^2),\]
which can be written as:
\[\omega=\frac{1}{x}dx\wedge d(y+\xi(x+y^2)), \quad f=\zeta(x+y^2),\]
for some function $\xi(t)$ such that $\xi'(t)=a(t)/2$. The final change of coordinates $y\mapsto y+\xi(x+y^2)$ brings the pair to the desired normal form:
\[\omega=\frac{1}{x}dx\wedge dy, \quad f=\zeta(x+(y+\xi(x+y^2)^2))\]
and the theorem is proved. 
\qed

\subsection{Proof of Theorem \ref{thm2}}

\noindent Case $A_0$: It follows immediately by Theorem \ref{hm} and the fact that $H''_f(\log \Sigma)=\{0\}$. 
\qed

\medskip

\noindent Case $A_1$: The proof is the same (easier) as for the $A_1$ case in dimension 2 presented above. Briefly, consider again normal form $A_1$ of Example \ref{sn}, and consider a diffeomorphism of the form:
\[(x,y_1,\cdots,y_n)\stackrel{\Psi}{\longmapsto} (xv(f),y_1v^{1/2}(f),\cdots,y_nv^{1/2}(f)),\]
for some function $v$ with $v(0)=1$. The diffeomorphism $\Psi$ preserves $\Sigma=\{x=0\}$, sends $f$ to some new function $\Psi^*f=fv(f)=\zeta(f)$, where $\zeta(0)=0$, $\zeta'(0)=1$, and sends $\omega$ to 
\[\Psi^*\omega=\frac{c_0(f)}{x}(\frac{2}{n}t(v^{n/2}(t))'+v^{n/2}(t))dx\wedge dy^n.\] 
Choosing now $v$ as the solution of the initial value problem:
\[\frac{2}{n}t(v^{n/2}(t))'+v^{n/2}(t)=c_0^{-1}(t), \quad v^{n/2}(0)=1,\]
we obtain the desired normal form:
\[\omega=\frac{1}{x}dx\wedge dy^n, \quad f=\zeta(x+\sum_{i=1}^ny_i^2),\]
which finishes the proof of the theorem.
\qed




\section*{Acknowledgements}

This research has been supported by the S\~ao Paulo Research Foundation (FAPESP), grant No: 2017/23555-9

\end{document}